\title{A note on Hodge-Tate Spectral Sequences}
\author{Zhiyou Wu}
\newcommand*{\rom}[1]{\expandafter\@slowromancap\romannumeral #1@}
\DeclareSymbolFontAlphabet{\mathbb}{AMSb} 
\DeclareSymbolFontAlphabet{\mathbbl}{bbold} 
\newcommand{\prism}{{\mathlarger{\mathbbl{\Delta}}}}
\newtheorem{proposition}{Proposition}[section]
\newtheorem{theorem}[proposition]{Theorem}
\newtheorem{corollary}[proposition]{Corollary}
\newtheorem{definition}[proposition]{Definition}
\newtheorem{remark}[proposition]{Remark}
\newtheorem{lemma}[proposition]{Lemma}
\tikzset{
    labl/.style={anchor=north, rotate=90, inner sep=.8mm}
}
\newenvironment{myproof}[1][\proofname]{%
  \begin{proof}[#1]\par\nobreak\ignorespaces
}{%
  \end{proof}
}
\begin{document}

\maketitle

\begin{abstract}
We prove that the Hodge-Tate spectral sequence of a proper smooth rigid analytic variety can be reconstructed from its infinitesimal $\mathbb{B}_{\text{dR}}^+$-cohomology through the Bialynicki-Birula map. A refinement of the decalage functor $L\eta$ is introduced to accomplish the proof.  Further, we give a new proof of the torsion-freeness of the infinitesimal $\mathbb{B}_{\text{dR}}^+$-cohomology
independent of Conrad-Gabber spreading theorem, and a conceptual explanation that the degeneration of Hodge-Tate spectral sequences is equivalent to that of Hodge-de Rham spectral sequences. 
\end{abstract}

\section{Introduction}

Let $X$ be a proper smooth rigid analytic variety over a complete  algebraically closed non-archimedean field $\mathbb{C}$
of mixed characteristic $p$. It is well-known that there is a Hodge-Tate spectral sequence
\[
E_2^{p,q} = H^{p}(X,\Omega_{X}^q)(-q) \ \ \Longrightarrow H^{p+q}_{\text{ét}}(X,\mathbb{Q}_p) \otimes_{\mathbb{Q}_p} \mathbb{C}
\]
from Hodge cohomology groups converging to the $p$-adic étale cohomology. The Hodge-Tate spectral sequences play an important role in some of the recent breakthroughs in arithmetic geometry, namely it is used to define  Hodge-Tate period maps as in \cite{Scholze2015}, \cite{Caraiani_2017} and \cite{CS2}, which is applied to prove the modularity theorem, and many other important results in \cite{10author}.

The spectral sequence is constructed from the truncated filtration $\tau_{\leq q} R\nu_{*} \hat{\mathcal{O}}_X$ on $R\nu_{*} \hat{\mathcal{O}}_X$, where 
\[
\nu: X_{\text{proét}} \rightarrow X_{\text{ét}}
\]
is the structure map from the proétale site to the étale site of $X$, and $\hat{\mathcal{O}}_X$ 
is the $p$-adic completion of the structure sheaf $\nu^* \mathcal{O}_{X_{\text{ét}}}$. In other words, it is the Grothendieck spectral sequence associated to the composite derived functor $R\Gamma(X_{\text{ét}}, - ) \circ R\nu_{*}$ evaluated on
$\hat{\mathcal{O}}_X$.  
The crucial ingredients in the construction is that we have canonical isomorphisms 
$H^i(X_{\text{proét}}, \hat{\mathcal{O}}_X) \cong H^i(X_{\text{ét}}, \mathbb{Q}_p) \otimes_{\mathbb{Q}_p} \mathbb{C}$, and 
$R^q\nu_{*} \hat{\mathcal{O}}_X \cong \Omega^q_{X_{\text{ét}}}(-q)$, see \cite{scholze2012perfectoid} section 3.3. 

On the other hand, we know from \cite{Bhatt2018} section 13 (see also \cite{Guo1} for a site theoretic construction of it) that there is a $\mathbb{B}^+_{\text{dR}}$-cohomology theory
\[
H^i_{\text{crys}}(X/\mathbb{B}^+_{\text{dR}})
\]
associated to $X$, where $\mathbb{B}^+_{\text{dR}}$ is the $\xi$-adic completion of $W(\mathcal{O}_{\mathbb{C}^{\flat}})[\frac{1}{p}]$, with $\mathbb{C}^{\flat}$ the tilt of $\mathbb{C}$ and $\xi$ a generator of the kernel of the canonical morphism 
$W(\mathcal{O}_{\mathbb{C}^{\flat}}) \rightarrow \mathcal{O}_{\mathbb{C}}$. They are finite free $\mathbb{B}^+_{\text{dR}}$-modules equipped with canonical morphisms
\[
H^i_{\text{crys}}(X/\mathbb{B}^+_{\text{dR}})  \longrightarrow
H^{i}_{\text{ét}}(X,\mathbb{Q}_p) \otimes_{\mathbb{Q}_p} \mathbb{B}^+_{\text{dR}}
\]
which induces identifications
\[
H^i_{\text{crys}}(X/\mathbb{B}^+_{\text{dR}}) \otimes_{\mathbb{B}^+_{\text{dR}}} \mathbb{B}_{\text{dR}} \cong
H^{i}_{\text{ét}}(X,\mathbb{Q}_p) \otimes_{\mathbb{Q}_p} \mathbb{B}_{\text{dR}}
\]
where $\mathbb{B}_{\text{dR}} := \mathbb{B}_{\text{dR}}^+[\frac{1}{\xi}]$. 

We view the above identification as providing a $\mathbb{B}_{\text{dR}}^+$-lattice 
$H^i_{\text{crys}}(X/\mathbb{B}^+_{\text{dR}})$ 
in the $\mathbb{B}_{\text{dR}}$-vector space $H^{i}_{\text{ét}}(X,\mathbb{Q}_p) \otimes_{\mathbb{Q}_p} \mathbb{B}_{\text{dR}}$. Note that there is another natural $\mathbb{B}_{\text{dR}}^+$-lattice 
$
H^{i}_{\text{ét}}(X,\mathbb{Q}_p) \otimes_{\mathbb{Q}_p} \mathbb{B}_{\text{dR}}^+
$
inside it, and the relative position of the two lattices is measured by
\[
\text{Fil}_m := 
(H^i_{\text{crys}}(X/\mathbb{B}^+_{\text{dR}}) \cap 
\xi^m
(H^{i}_{\text{ét}}(X,\mathbb{Q}_p) \otimes_{\mathbb{Q}_p} \mathbb{B}_{\text{dR}}^+))/
(H^i_{\text{crys}}(X/\mathbb{B}^+_{\text{dR}}) \cap 
\xi^{m+1}
(H^{i}_{\text{ét}}(X,\mathbb{Q}_p) \otimes_{\mathbb{Q}_p} \mathbb{B}_{\text{dR}}^+))
\]
viewed as subspaces of
\[
\xi^{m}
(H^{i}_{\text{ét}}(X,\mathbb{Q}_p) \otimes_{\mathbb{Q}_p} \mathbb{B}_{\text{dR}}^+)/
\xi^{m+1}
(H^{i}_{\text{ét}}(X,\mathbb{Q}_p) \otimes_{\mathbb{Q}_p} \mathbb{B}_{\text{dR}}^+)
=
H^{i}_{\text{ét}}(X,\mathbb{Q}_p) \otimes_{\mathbb{Q}_p} \mathbb{C} (m).
\]
The main result of this paper confirms that this is the same filtration induced by Hodge-Tate spectral sequence on $H^{i}_{\text{ét}}(X,\mathbb{Q}_p) \otimes_{\mathbb{Q}_p} \mathbb{C}$. 

\begin{theorem}
The filtration
$\text{Fil}_m (-m)$ is the same as the filtration induced by Hodge-Tate spectral sequence on 
$H^{i}_{\text{ét}}(X,\mathbb{Q}_p) \otimes_{\mathbb{Q}_p} \mathbb{C}$. More precisely, $\text{Fil}_m(-m)$ is equal to the image of 
\[
H^i(X_{\text{ét}},\tau_{\leq m} R\nu_{*} \hat{\mathcal{O}}_X) \longrightarrow 
H^i(X_{\text{ét}}, R\nu_{*} \hat{\mathcal{O}}_X) 
\cong 
H^i(X_{\text{proét}}, \hat{\mathcal{O}}_X) 
\cong 
H^{i}_{\text{ét}}(X,\mathbb{Q}_p) \otimes_{\mathbb{Q}_p} \mathbb{C},
\]
and we have a canonical identification
\[
\text{Fil}_m(-m) /\text{Fil}_{m+1}(-1-m)  \cong H^i(X, \Omega_X^m) (-m).
\]
\end{theorem}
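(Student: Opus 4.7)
The strategy is to lift the filtration comparison from cohomology to a comparison of filtered complexes, using a filtered refinement of the décalage functor $L\eta_\xi$ as the bridge.

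First, I would package the relative position of the two lattices into a single filtered complex. Inside the common ambient object $R\Gamma_{\text{crys}}(X/\mathbb{B}_{\text{dR}}^+) \otimes_{\mathbb{B}_{\text{dR}}^+} \mathbb{B}_{\text{dR}} \cong R\Gamma_{\text{ét}}(X, \mathbb{Q}_p) \otimes_{\mathbb{Q}_p} \mathbb{B}_{\text{dR}}$, the crystalline complex is filtered by its intersections with $\xi^m \cdot (R\Gamma_{\text{ét}}(X,\mathbb{Q}_p) \otimes_{\mathbb{Q}_p} \mathbb{B}_{\text{dR}}^+)$. This filtration induces $\text{Fil}_m$ on cohomology by construction, so the problem becomes one of recognizing this filtered complex, after a suitable operation, as the Hodge-Tate complex $R\nu_* \hat{\mathcal{O}}_X$ endowed with its canonical truncation filtration.

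Next, I would apply the décalage functor $L\eta_\xi$. The Bialynicki-Birula map realizes $R\nu_* \hat{\mathcal{O}}_X$ (up to Tate twist) as $L\eta_\xi$ of a suitable specialization of $R\Gamma_{\text{crys}}(X/\mathbb{B}_{\text{dR}}^+)$. The key structural property of $L\eta_\xi$ is that it exchanges $\xi$-adic filtrations on the source for truncation filtrations on the target; this is exactly what one would exploit to match $\text{Fil}_m(-m)$ with the image of $H^i(X_{\text{ét}}, \tau_{\leq m} R\nu_* \hat{\mathcal{O}}_X)$ in $H^i(X_{\text{proét}}, \hat{\mathcal{O}}_X)$.

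The main obstacle is that the classical $L\eta$ is defined on the derived category and does not automatically interact with filtrations in a rigid way. To make the argument go through, I would need a refinement of $L\eta$ that acts on filtered complexes and is compatible both with the Bialynicki-Birula comparison and with the passage between the étale-lattice $\xi$-filtration on the crystalline side and the canonical truncation filtration on the Hodge-Tate side. Constructing this filtered enhancement, and verifying these compatibilities, is the central technical issue; the paper's promised refinement of $L\eta$ is precisely what is needed here.

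Once the filtration identification is in place, the graded piece statement follows formally from the standard isomorphism $R^m \nu_* \hat{\mathcal{O}}_X \cong \Omega_X^m(-m)$, which identifies the successive quotients of the truncation filtration on $R\nu_* \hat{\mathcal{O}}_X$ with the relevant Hodge sheaves, up to the expected Tate twist, thereby yielding the desired expression for $\text{Fil}_m(-m)/\text{Fil}_{m+1}(-1-m)$.
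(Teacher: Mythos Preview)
Your general instinct—that a filtered refinement of $L\eta_\xi$ is the right bridge—matches the paper's approach, but two substantive points are missing or garbled.

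First, a confusion about the role of $L\eta_\xi$. The Hodge--Tate complex $R\nu_*\hat{\mathcal{O}}_X$ is \emph{not} obtained by applying $L\eta_\xi$ to anything; it is the reduction $\prism_{X/\mathbb{B}^+_{\mathrm{dR}}}\otimes^L_{\mathbb{B}^+_{\mathrm{dR}}}\mathbb{B}^+_{\mathrm{dR}}/\xi$. What $L\eta_\xi$ produces, after reducing mod $\xi$, is the de Rham complex $\Omega_X^\bullet$. The filtered refinement $L\eta_{\xi,\bullet}$ sits between these: its graded pieces are $\tau_{\le m}(\prism/\xi)(m)\cong\tau_{\le m}R\nu_*\hat{\mathcal{O}}_X(m)$, while its reduction mod $\xi$ is controlled by the Hodge filtration on $\Omega_X^\bullet$. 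Getting this picture straight is essential, since the whole argument hinges on playing these two descriptions against each other.

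Second, and more seriously, you treat the passage from the filtered complex to $\mathrm{Fil}_m$ on cohomology as essentially formal once the refinement is built. It is not. Having $\mathrm{Gr}^m(L\eta_{\xi,\bullet}\prism)\cong\tau_{\le m}R\nu_*\hat{\mathcal{O}}_X(m)$ tells you nothing a priori about whether $H^i(X,L\eta_{\xi,m}\prism)$ injects into $H^i(X,L\eta_\xi\prism)$, nor whether its image equals the lattice intersection $H^i_{\mathrm{crys}}\cap\xi^m(H^i_{\text{ét}}\otimes\mathbb{B}^+_{\mathrm{dR}})$. The paper establishes both by proving that every $H^i(X,L\eta_{\xi,m}\prism)$ is $\xi$-torsion-free; this is the real work (Proposition~\ref{torsionfree}) and its proofs use, as input, the degeneration of the Hodge--Tate (or, in the second proof, Hodge--de Rham) spectral sequence. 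Only then does an inductive diagram chase, again invoking Hodge--Tate degeneration for the surjectivity direction, pin down the image as $\mathrm{Fil}_m$. Your proposal does not mention torsion-freeness or degeneration at all, so as written it leaves the central difficulty unaddressed.
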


The result is well-known among experts. For example, the interpretation in terms of the Bialynicki-Birula map and the theory in \cite{SCHOLZE2013} (to be explained in the following remarks)  is used in one of the constructions of Hodge-Tate period maps as in \cite{Caraiani_2017} and \cite{hansen2016period}. However, to the best of the author's knowledge, the formulation as in the theorem in terms of 
$H^i_{\text{crys}}(X/\mathbb{B}^+_{\text{dR}})$
is not explicit in the literature. It gives a new construction of the Hodge-Tate filtration 
in terms of the $\mathbb{B}^+_{\text{dR}}$-cohomology 
$H^i_{\text{crys}}(X/\mathbb{B}^+_{\text{dR}})$.
\footnote{The $\mathbb{B}^+_{\text{dR}}$-cohomology is new only for rigid analytic varieties defined over $\mathbb{C}$. If the space is defined over a discretely valued field, it  essentially reduced to the theory in \cite{SCHOLZE2013}. } 

The proof is to study a refinement of the decalage functor $L\eta$ introduced in \cite{Bhatt2018}. It is a functor
\[
L\eta_{\mathcal{I},\bullet}: D(\mathcal{O}_T) \longrightarrow FD(\mathcal{O}_T)
\]
from the derived category of a ring topos $T$ to its filtered derived category. Its graded pieces can be  identified, so does its reduction mod $\mathcal{I}$. 

As a byproduct of our treatment of the Hodge-Tate filtration, we give a new proof of the torsion-freeness of $H^i_{\text{crys}}(X/\mathbb{B}^+_{\text{dR}})$
that is independent of Conrad-Gabber. 
It is complementary to the proofs found in the literature (see \cite{Bhatt2018} 13.19 and \cite{Guo1} theorem 7.3.5), which to the best of the author's knowledge all make use of Conrad-Gabber.  

Moreover, we provide a conceptual explanation of the equivalence between degeneration of Hodge-de Rham and Hodge-Tate spectral sequence, which 
is implicit in the proof of \cite{Bhatt2018} theorem 13.3.

\begin{remark} \label{hthdsp}
The statement of the theorem includes that the Hodge-Tate spectral sequence degenerates at $E_2$. This is actually used as an input in the proof. More precisely, we use the degeneration of both Hodge-de Rham and Hodge-Tate spectral sequences
\footnote{Indeed, we provide two proofs, one of which only uses the degeneration of Hodge-Tate spectral sequences.}, which is proved in \cite{Bhatt2018} theorem 13.3, see also \cite{Guo1} theorem 7.3.5. and \cite{Guo2022-vo} theorem 1.1.3. Both approaches depend on Conrad-Gabber spreading theorem. It would be interesting if we can find a direct proof of the degeneration. 
\end{remark}

\begin{remark}
The $\mathbb{B}_{\text{dR}}^+$-lattices in $H^{i}_{\text{ét}}(X,\mathbb{Q}_p) \otimes_{\mathbb{Q}_p} \mathbb{B}_{\text{dR}}$
are parameterized by the $\mathbb{C}$-points of $\mathbb{B}_{\text{dR}}^+$-affine Grassmannian $Gr_{GL_n}^{\mathbb{B}_{\text{dR}}^+}$ as defined in \cite{Caraiani_2017} definition 3.4.1, see \cite{SW17} lecture 19 as well, where $n$ is the dimension of $H^{i}_{\text{ét}}(X,\mathbb{Q}_p)$. The subspaces $Fil_m(-m)$  are parameterized by the ($\mathbb{C}$-points of) flag variety with respect to the vector space
$H^{i}_{\text{ét}}(X,\mathbb{Q}_p) \otimes_{\mathbb{Q}_p} \mathbb{C}$, and $Fil_m(-m)$ associated to 
the lattice
$H^i_{\text{crys}}(X/\mathbb{B}^+_{\text{dR}})$
is exactly its image under the Bialynicki-Birula map as defined in \cite{Caraiani_2017} proposition 3.4.3 from the affine Grassmannian
to the flag variety. \footnote{To be fully precise, the Bialynicki-Birula map is only defined on an open Schubert cell of the affine Grassmannian, so we need to first fix the type of the relative position between lattices (which also specifies the flag variety) to actually have the Bialynicki-Birula map, but this creates no problem. }
\end{remark}

\begin{remark}
When $X$ is defined over a discretely valued field, we know from the results in \cite{Bhatt2018} section 13 that 
$H^i_{\text{crys}}(X_{\mathbb{C}}/\mathbb{B}^+_{\text{dR}})$
can be computed as the de Rham cohomology of $X$ base changed to $\mathbb{B}^+_{\text{dR}}$. Then the theorem follows from \cite{SCHOLZE2013} theorem 8.4. 

Moreover, using the spreading theorem of Conrad-Gabber (see \cite{Bhatt2018} corollary 13.16), we can deduce the theorem from \cite{SCHOLZE2013} theorem 8.8 and proposition 7.9. Note that the condition of theorem 8.8. in $loc.cit.$ is proved in \cite{SW17} theorem 10.5.1. We believe that it is more natural to prove the theorem directly from the very construction of 
$H^i_{\text{crys}}(X/\mathbb{B}^+_{\text{dR}})$,
thereby avoiding the use of  theory of $\mathcal{O}\mathbb{B}^+_{\text{dR}}$-modules in \cite{SCHOLZE2013}. However, we still cannot avoid Conrad-Gabber in our treatment, see remark \ref{hthdsp}. 
\end{remark}

\subsection*{Acknowledgement}
I would like to thank Yu Min and Yupeng Wang for discussions related to this work. I would like to thank Peter Scholze for corrections on the draft.

\section{Recollections} 

Let us first recall the construction of 
$H^i_{\text{crys}}(X/\mathbb{B}^+_{\text{dR}})$. The original construction in \cite{Bhatt2018} is to construct an explicit complex for each small affinoid $X$, and then glue. In \cite{Guo1}, Guo defined an infinitesimal site and reconstruct 
$H^i_{\text{crys}}(X/\mathbb{B}^+_{\text{dR}})$
as the cohomology of the structure sheaf on this site. Further, the $\mathbb{B}^+_{\text{dR}}$-prismatic site is introduced in \cite{Guo2}, which unifies the previous constructions. We will use the formulation of $\mathbb{B}^+_{\text{dR}}$-prismatic site for convenience, but the original construction of \cite{Bhatt2018} can also be invoked here. 

For our purpose, we only need to know that 
$H^i_{\text{crys}}(X/\mathbb{B}^+_{\text{dR}})$
can be computed as the cohomology of a sheaf of complexes 
\[
L\eta_{\xi} \prism_{X/\mathbb{B}^+_{\text{dR}}}
\]
on the étale site $X_{\text{ét}}$ of $X$, where 
$L\eta_{\xi}$ is the decalage functor with respect to $\xi$ as defined in \cite{Bhatt2018} section 6, 
and 
$\prism_{X/\mathbb{B}^+_{\text{dR}}}$
is the sheaf of complexes 
\footnote{in the $\infty$-catogorical sense.}
of $\mathbb{B}^+_{\text{dR}}$-modules on $X_{\text{ét}}$ which sends every affinoid to its derived $\mathbb{B}^+_{\text{dR}}$-prismatic  cohomology (with respect to the structure sheaf) as defined in \cite{Guo2} section 2. In other words, we have 
\begin{equation} \label{defn}
H^i_{\text{crys}}(X/\mathbb{B}^+_{\text{dR}}) 
\cong 
R^i\Gamma(X_{\text{ét}},
L\eta_{\xi} \prism_{X/\mathbb{B}^+_{\text{dR}}}),
\end{equation}
which is \cite{Guo2} theorem 6.0.1.

It is proved in \cite{Guo2} theorem 5.1.1 and the discussion before it that we have a canonical quasi-isomorphism
\begin{equation} \label{derham}
L\eta_{\xi} \prism_{X/\mathbb{B}^+_{\text{dR}}} \otimes^L_{\mathbb{B}^+_{\text{dR}}} \mathbb{B}^+_{\text{dR}}/\xi
\cong 
H^{\bullet} (\prism_{X/\mathbb{B}^+_{\text{dR}}} /\xi) 
\cong \Omega_X^{\bullet},
\end{equation}
where the first isomorphism is \cite{Bhatt2018} proposition 6.12, and $\Omega_X^{\bullet}$ is the de Rham complex of $X$.

Moreover,  there is a natural quasi-isomorphism
\begin{equation} \label{proetale}
R\Gamma(X_{\text{ét}}, \prism_{X/\mathbb{B}^+_{\text{dR}}}) 
\cong 
R\Gamma(X_{\text{proét}}, \mathbb{B}^+_{\text{dR}})
\cong 
R\Gamma(X_{\text{ét}}, \mathbb{Q}_p) \otimes_{\mathbb{Q}_p} \mathbb{B}^+_{\text{dR}}
\end{equation}
where $\mathbb{B}^+_{\text{dR}}$ is, by abuse of notation, the $\mathbb{B}^+_{\text{dR}}$-period sheaf on $X_{\text{proét}}$ 
as defined in \cite{SCHOLZE2013} definition 6.1. The first isomorphism is \cite{Guo2} theorem 7.2.1, and the second is in the proof of \cite{SCHOLZE2013} theorem 8.4.  The canonical comparison morphism
\[
H^i_{\text{crys}}(X/\mathbb{B}^+_{\text{dR}})  \longrightarrow
H^{i}_{\text{ét}}(X,\mathbb{Q}_p) \otimes_{\mathbb{Q}_p} \mathbb{B}_{\text{dR}}^+
\]
is induced from the canonical map
\[
\iota: 
L\eta_{\xi} \prism_{X/\mathbb{B}^+_{\text{dR}}} 
\longrightarrow 
\prism_{X/\mathbb{B}^+_{\text{dR}}} 
\]
together with the identification. Note that $\iota$ exists because $\prism_{X/\mathbb{B}^+_{\text{dR}}}  \in D_{\geq 0}$
and $H^0(\prism_{X/\mathbb{B}^+_{\text{dR}}} )$
is $\xi$-torsion-free. 

Lastly, it follows from the proof of \cite{Guo2} theorem 7.2.1 that we have a natural quasi-isomorphism
\begin{equation} \label{reduction}
\overline{\prism}_{X/\mathbb{B}^+_{\text{dR}}} := \prism_{X/\mathbb{B}^+_{\text{dR}}} \otimes^L_{\mathbb{B}^+_{\text{dR}}} \mathbb{B}^+_{\text{dR}}/\xi
\cong 
R\nu_{*} \hat{\mathcal{O}}_X
\end{equation}
where 
$
\nu: X_{\text{proét}} \rightarrow X_{\text{ét}}
$
and $\hat{\mathcal{O}}_X$ are as in the introduction.

\section{A refinement of $L\eta$-functor}

In this section, we construct a refinement of the decalage functor $L\eta$ introduced in \cite{Bhatt2018}. We work in the same setting as  $loc.cit.$ 

Let $(T, \mathcal{O}_T)$
be a ringed topos, and $D(\mathcal{O}_T)$ the derived category of $\mathcal{O}_T$-modules. Let $\mathcal{I} \subset \mathcal{O}_T$ be an invertible ideal sheaf. 

Recall that a complex $K^{\bullet}$ of $\mathcal{O}_T$-modules is said to be $\mathcal{I}$-torsion-free if the canonical map 
$\mathcal{I}\otimes K^i \rightarrow K^i$ is injective for every $i$, and we can define a complex $(\eta_{\mathcal{I}} K)^{\bullet}$ with terms  
\[
(\eta_{\mathcal{I}} K )^i= \{ x \in K^i | dx \in \mathcal{I} K^{i+1} \} \otimes_{\mathcal{O}_T} \mathcal{I}^{\otimes i} 
\]
where $K^{\bullet}$ is a $\mathcal{I}$-torsion-free complex, and there is a natural differential map making $\eta_{\mathcal{I}} K^{\bullet} $ a complex. 
By \cite{Bhatt2018} lemma 6.4, we have 
\[
H^i(\eta_{\mathcal{I}} K^{\bullet}) \cong (H^i(K^{\bullet})/ H^i(K^{\bullet})[\mathcal{I}]) \otimes_{\mathcal{O}_T} \mathcal{I}^{\otimes i}, 
\]
so we can derive the construction to obtain a functor
\[
L\eta_{\mathcal{I}}: D(\mathcal{O}_T) \longrightarrow D(\mathcal{O}_T).
\]

\begin{definition}
Let $m \in \mathbb{Z}$, and $K^{\bullet }$ a $\mathcal{I}$-torsion-free complex of $\mathcal{O}_T$-modules. Define a new complex $(\eta_{\mathcal{I},m} K)^{\bullet}$ 
with terms
\[
(\eta_{\mathcal{I},m} K)^i = 
\begin{cases}
      (\eta_{\mathcal{I}}K)^i & i \geq m \\
      K^i \otimes_{\mathcal{O}_T} \mathcal{I}^{\otimes m} & i < m.
    \end{cases}  
\]
The differential  is inherited from that of $\eta_{\mathcal{I}}K$ (resp. $K^{\bullet}$ ($\otimes \text{Id}_{\mathcal{I}^{\otimes m}}$)) for $i \geq m$ (resp. $i< m-1$). For $i=m-1$, the differential
\[
d: K^{m-1} \otimes_{\mathcal{O}_T} \mathcal{I}^{\otimes m} \longrightarrow 
(\eta_{\mathcal{I}}K)^m = \{ x \in K^m | dx \in \mathcal{I} K^{m+1} \} \otimes_{\mathcal{O}_T} \mathcal{I}^{\otimes m} 
\]
is defined to be $d \otimes \text{Id}_{\mathcal{I}^{\otimes m}}$. 
\end{definition}

We first compute the cohomology of $\eta_{\mathcal{I},m} K^{\bullet}$. 

\begin{lemma}
Let $K^{\bullet}$ be an $\mathcal{I}$-torsion-free complex, 
then we have a natural isomorphism
\[
H^i(\eta_{\mathcal{I},m} K^{\bullet}) \cong
\begin{cases}
      H^i(\eta_{\mathcal{I}} K^{\bullet})  & i > m \\
      H^i( K^{\bullet}) \otimes_{\mathcal{O}_T} \mathcal{I}^{\otimes m} & i \leq m.
    \end{cases}  
\]
\end{lemma}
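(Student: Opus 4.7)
The strategy is a direct, degree-by-degree computation. Since $\mathcal{I}$ is invertible, one may argue locally with a chosen generator $\lambda$ of $\mathcal{I}$; then $(\eta_{\mathcal{I}} K)^i$ identifies with the submodule $\lambda^i\{x \in K^i : dx \in \lambda K^{i+1}\}$ of $\lambda^i K^i$, and $\eta_{\mathcal{I},m}K^\bullet$ agrees with $\eta_{\mathcal{I}} K^\bullet$ in degrees $\geq m$ while replacing the terms in degrees $< m$ by all of $\lambda^m K^i$. The plan is to inspect the three-term subcomplex $(\eta_{\mathcal{I},m}K)^{i-1} \to (\eta_{\mathcal{I},m}K)^i \to (\eta_{\mathcal{I},m}K)^{i+1}$ case by case, splitting into $i > m$, $i = m$, $i = m-1$, and $i < m-1$.

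For $i > m$ and $i < m - 1$ the claim is immediate. When $i > m$, the three relevant terms and differentials are literally those of $\eta_{\mathcal{I}} K^\bullet$, so $H^i(\eta_{\mathcal{I},m}K^\bullet) = H^i(\eta_{\mathcal{I}} K^\bullet)$; when $i < m-1$, the three terms are $\lambda^m K^j$ ($j = i-1, i, i+1$) with the ordinary differentials, giving $H^i(K^\bullet)\otimes \mathcal{I}^{\otimes m}$. The boundary case $i = m-1$ is also easy: even though the target of $d\otimes \text{Id}\colon K^{m-1}\otimes \mathcal{I}^{\otimes m} \to (\eta_{\mathcal{I}} K)^m$ is a proper submodule of $K^m\otimes \mathcal{I}^{\otimes m}$, the kernel is still $Z^{m-1}(K)\otimes \mathcal{I}^{\otimes m}$ (the inclusion into $K^m\otimes \mathcal{I}^{\otimes m}$ is injective, and cycles land in $Z^m \subset (\eta_{\mathcal{I}} K)^m$), and the image from below is $B^{m-1}(K)\otimes \mathcal{I}^{\otimes m}$, so once again $H^{m-1}(K)\otimes \mathcal{I}^{\otimes m}$.

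The heart of the matter is $i = m$, and this is also the point that motivates the definition. The kernel at degree $m$ remains $\lambda^m Z^m(K)$ by the same torsion-freeness argument used for $\eta_{\mathcal{I}} K^\bullet$. However, the image of the differential from degree $m-1$ is now $\lambda^m dK^{m-1} = \lambda^m B^m(K)$ --- coming from all of $K^{m-1}$ --- rather than the larger $\lambda^m\{y\in K^m : \lambda y \in B^m(K)\}$ appearing in $\eta_{\mathcal{I}} K^\bullet$. So we obtain $\lambda^m H^m(K)$ in place of $\lambda^m(H^m(K)/H^m(K)[\mathcal{I}])$, which is exactly the announced formula and reflects the observation that enlarging the degree $m-1$ term from $(\eta_{\mathcal{I}} K)^{m-1}$ to $K^{m-1}\otimes \mathcal{I}^{\otimes m}$ precisely kills the $\mathcal{I}$-torsion in $H^m(K)$ that $L\eta_{\mathcal{I}}$ would otherwise preserve. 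There is no substantive obstacle --- the lemma is bookkeeping with three-term subcomplexes --- but this final identification is the one place where the $\mathcal{I}$-torsion-freeness of $K^\bullet$ and the precise definition of the twisted boundary differential really come into play.
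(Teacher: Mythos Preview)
Your proof is correct and follows essentially the same degree-by-degree approach as the paper: the cases $i>m$ and $i<m-1$ are immediate, and for $i=m-1$ and $i=m$ you compute cocycles and coboundaries directly, exactly as the paper does.

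One expository slip in your final paragraph: the passage from $(\eta_{\mathcal{I}}K)^{m-1}$ to $K^{m-1}\otimes\mathcal{I}^{\otimes m}$ is a \emph{shrinking}, not an enlarging, of the degree $m-1$ term (since $\mathcal{I}K^{m-1}\subset\{x\in K^{m-1}:dx\in\mathcal{I}K^m\}$), and correspondingly the boundary subspace at degree $m$ becomes \emph{smaller}, so the $\mathcal{I}$-torsion in $H^m(K)$ is \emph{restored} rather than killed. Your actual computation of cocycles and coboundaries is unaffected by this and gives the right answer.
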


\begin{proof}
This is obvious except for $i=m,m-1$. For $i=m-1$, we simply note that $(\eta_{\mathcal{I}}K)^m$ is a subspace of $K^m \otimes_{\mathcal{O}_T} \mathcal{I}^{\otimes m}$, so the kernel of $d$ at $m-1$ is the same as  
$Z^{m-1}(K^{\bullet}) \otimes_{\mathcal{O}_T} \mathcal{I}^{\otimes m} $, 
the cocycles of $K^{\bullet}$ twisted by $\mathcal{I}^{\otimes m} $. 

For $i=m$, we observe that the cocycle space is 
$Z^m(K^{\bullet}) \otimes_{\mathcal{O}_T} \mathcal{I}^{\otimes m} $, and the boundary is exactly $B^m(K^{\bullet}) \otimes_{\mathcal{O}_T} \mathcal{I}^{\otimes m}$. 
\end{proof}

We can now dervie the $\eta_{\mathcal{I},m}$-construction to obtain a functor
\[
L\eta_{\mathcal{I},m}: D(\mathcal{O}_T) \longrightarrow D(\mathcal{O}_T).
\]

The most important property of 
$L\eta_{\mathcal{I},m}$ is that it forms a filtration. In other words, for every  $\mathcal{I}$-torsion-free complex  $K^{\bullet}$, we have a canonical filtration
\[
\cdots
\subset
\eta_{\mathcal{I},m+1} K^{\bullet} \subset \eta_{\mathcal{I},m} K^{\bullet} 
\subset \cdots.
\]
The inclusion at degrees $i\geq m+1$ (resp. $i\leq m-1$) is the identity of $(\eta_{\mathcal{I}} K^{\bullet})^i$ (resp. $\text{id}_{K^i \otimes_{\mathcal{O}_T} \mathcal{I}^{\otimes m}} \otimes (\mathcal{I} \hookrightarrow \mathcal{O}_T) $). For degree $m$, it is given by the canonical inclusion 
\[
(K^{m} \otimes 
\mathcal{I} )\otimes_{\mathcal{O}_T} \mathcal{I}^{\otimes m} \longrightarrow 
(\eta_{\mathcal{I}}K)^m = \{ x \in K^m | dx \in \mathcal{I} K^{m+1} \} \otimes_{\mathcal{O}_T} \mathcal{I}^{\otimes m}
\]
induced by $(\mathcal{I} \hookrightarrow \mathcal{O}_T )\otimes \text{id}_{\mathcal{I}^{\otimes m}}$. 
We can summarize the information as in the diagram 
\[
\begin{tikzcd}
\vdots \arrow[r,equal]
&
\vdots 
\\
(\eta_{\mathcal{I}}K)^{m+1} 
\arrow[r,equal] 
\arrow[u]
 &  
 (\eta_{\mathcal{I}}K)^{m+1}
 \arrow[u]
 \\
K^m \otimes_{\mathcal{O}_T} \mathcal{I}^{\otimes m+1}
\arrow[r, hook] 
\arrow[u]
 & 
 (\eta_{\mathcal{I}}K)^{m}
 \arrow[u]
 \\
 K^{m-1} \otimes_{\mathcal{O}_T} \mathcal{I}^{\otimes m+1}
\arrow[r, hook] 
\arrow[u]
&
K^{m-1} \otimes_{\mathcal{O}_T} \mathcal{I}^{\otimes m}
\arrow[u]
\\
\vdots \arrow[r,hook] \arrow[u]
&
\vdots \arrow[u]
\end{tikzcd}
\]

We can compute the graded pieces of the filtration.

\begin{lemma} \label{graded}
Let $K^{\bullet}$ be an $\mathcal{I}$-torsion-free complex, we have a canonical isomorphism
\[
\eta_{\mathcal{I},m} K^{\bullet}/ \eta_{\mathcal{I},m+1} K^{\bullet} 
\cong 
\tau_{\leq m} (K^{\bullet} \otimes_{\mathcal{O}_T} \mathcal{O}_T/\mathcal{I}) \otimes_{\mathcal{O}_T} \mathcal{I}^{\otimes m}.
\]
\end{lemma}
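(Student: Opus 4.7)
The plan is to compute the quotient complex $\eta_{\mathcal{I},m}K^{\bullet}/\eta_{\mathcal{I},m+1}K^{\bullet}$ degree-by-degree and match both the terms and the differentials with those of $\tau_{\leq m}(K^{\bullet}\otimes_{\mathcal{O}_T}\mathcal{O}_T/\mathcal{I}) \otimes_{\mathcal{O}_T} \mathcal{I}^{\otimes m}$. The repeated tool is that $\mathcal{I}$ is invertible, so tensoring by any $\mathcal{I}^{\otimes k}$ is exact, which lets us commute the twist past subquotients and identify $\mathcal{I}^{\otimes m}/\mathcal{I}^{\otimes m+1} \cong (\mathcal{O}_T/\mathcal{I}) \otimes_{\mathcal{O}_T} \mathcal{I}^{\otimes m}$. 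For degrees $i > m$ the two filtration steps coincide, so the quotient vanishes, matching the vanishing of $\tau_{\leq m}$ above $m$. For degrees $i < m$, the quotient at degree $i$ is $(K^i \otimes \mathcal{I}^{\otimes m})/(K^i \otimes \mathcal{I}^{\otimes m+1})$, which by the identification above becomes $(K^i/\mathcal{I}K^i) \otimes \mathcal{I}^{\otimes m}$, exactly the $i$-th term of the target.

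The crucial step is degree $m$. Here I would write $(\eta_{\mathcal{I}}K)^m = \{x \in K^m \mid dx \in \mathcal{I}K^{m+1}\} \otimes \mathcal{I}^{\otimes m}$ and $K^m \otimes \mathcal{I}^{\otimes m+1} \cong (\mathcal{I}K^m) \otimes \mathcal{I}^{\otimes m}$, so the quotient becomes $(\{x \in K^m \mid dx \in \mathcal{I}K^{m+1}\}/\mathcal{I}K^m) \otimes \mathcal{I}^{\otimes m}$. The first factor is precisely the $m$-th cocycle sheaf $Z^m(K^{\bullet} \otimes \mathcal{O}_T/\mathcal{I})$ of the reduced complex, which is the degree-$m$ term of $\tau_{\leq m}(K^{\bullet} \otimes \mathcal{O}_T/\mathcal{I})$.

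It then remains to verify that the differentials inherited from the definition of $\eta_{\mathcal{I},m}$ agree with those of the truncated reduced complex. Below degree $m-1$ both differentials are simply the differential of $K^{\bullet}$ (twisted by $\mathcal{I}^{\otimes m}$) reduced modulo $\mathcal{I}$, so the agreement is tautological. At the transition $m-1 \to m$, the differential of $\eta_{\mathcal{I},m}K^{\bullet}$ is by definition $d \otimes \mathrm{id}_{\mathcal{I}^{\otimes m}}$, and after passing to the quotient it descends to the reduced differential $K^{m-1}/\mathcal{I}K^{m-1} \to Z^m(K^{\bullet}\otimes \mathcal{O}_T/\mathcal{I})$ (twisted by $\mathcal{I}^{\otimes m}$), which matches the differential of $\tau_{\leq m}$ at this spot.

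I expect the principal subtlety to be the degree-$m$ identification: one has to use invertibility of $\mathcal{I}$ in a non-formal way to recognize that the naive subquotient of $K^m$ inside $(\eta_{\mathcal{I}}K)^m / (K^m \otimes \mathcal{I}^{\otimes m+1})$ is literally (and naturally) the cocycle subsheaf of the reduced complex, rather than merely abstractly isomorphic to it. Once that naturality is in place, assembling the degree-wise identifications into a map of complexes and checking it is an isomorphism reduces to the elementary bookkeeping sketched above.
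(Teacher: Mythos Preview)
Your proposal is correct and follows essentially the same approach as the paper: both compute the quotient degree by degree, dismiss degrees $i \neq m$ as obvious, and identify the degree-$m$ term with $Z^m(K^\bullet \otimes \mathcal{O}_T/\mathcal{I}) \otimes \mathcal{I}^{\otimes m}$ via the same calculation $\{x \in K^m \mid dx \in \mathcal{I}K^{m+1}\}/\mathcal{I}K^m$. You are slightly more thorough than the paper in explicitly checking the differentials (the paper's proof simply declares everything except degree $m$ ``obvious'' and does not spell out the compatibility of differentials), but this is a matter of exposition rather than a different argument.
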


\begin{proof}
This is obvious except for degree $m$. We compute that
\[
(\eta_{\mathcal{I}}K)^{m}/
(K^m \otimes_{\mathcal{O}_T} \mathcal{I}^{\otimes m+1} )
\cong 
\frac{
\{ x \in K^m | dx \in \mathcal{I} K^{m+1} \}}{\mathcal{I} K^m}
\otimes_{\mathcal{O}_T}  \mathcal{I}^{\otimes m}
\]
\[
\cong 
\{ \Bar{x} \in K^m \otimes_{\mathcal{O}_T} \mathcal{O}_T/\mathcal{I} \ | \ d\Bar{x} = 0 \in  K^{m+1} \otimes_{\mathcal{O}_T} \mathcal{O}_T/\mathcal{I}\}
\otimes_{\mathcal{O}_T}  \mathcal{I}^{\otimes m},
\]
but this is exactly the degree $m$ part of
$\tau_{\leq m} (K^{\bullet} \otimes_{\mathcal{O}_T} \mathcal{O}_T/\mathcal{I}) \otimes_{\mathcal{O}_T} \mathcal{I}^{\otimes m}$
by definition of $\tau_{\leq m} $. 
\end{proof}

A good way to package the information of 
$\eta_{\mathcal{I},m}$-construction  is to view 
$\{ L \eta_{\mathcal{I},m} \}_{m \in \mathbb{Z}}$ 
as a functor
\[
L \eta_{\mathcal{I},\bullet} : D(\mathcal{O}_T) 
\longrightarrow 
FD(\mathcal{O}_T)
\]
from the derived category 
$D(\mathcal{O}_T)$ 
to the filtered derived category of $\mathcal{O}_T$. Moreover, the graded quotient functor can be identified as 
\[
\text{Gr}^m(L \eta_{\mathcal{I},\bullet} (-)) \cong 
\tau_{\leq m} (- \otimes^L_{\mathcal{O}_T} \mathcal{O}_T/\mathcal{I}) \otimes_{\mathcal{O}_T} \mathcal{I}^{\otimes m}. 
\]

The next result we want to establish is the behaviour of $L \eta_{\mathcal{I},\bullet}$ under the quotient by $\mathcal{I}$. Recall as in \cite{Bhatt2018} proposition 6.12 we have a canonical quasi-isomorphism
\[
(L \eta_{\mathcal{I}} K )\otimes^L_{\mathcal{O}_T} \mathcal{O}_T/\mathcal{I}  \cong H^{\bullet}( K/\mathcal{I})
\]
where 
$H^{\bullet}( K/\mathcal{I})$
is the complex whose degree $i$-th term is $H^i(K \otimes^L_{\mathcal{O}_T} \mathcal{O}_T/\mathcal{I}) \otimes_{\mathcal{O}_T} \mathcal{I}^{\otimes i}$
and the differential is given by the Bockstein map with respect to
\[
0 \longrightarrow \mathcal{I}/\mathcal{I}^2 \longrightarrow \mathcal{O}_T/\mathcal{I}^2 \longrightarrow \mathcal{O}_T/\mathcal{I} \longrightarrow 0.  
\]
We will prove a slight refinement of the result for $L \eta_{\mathcal{I},\bullet}$. 

Let $K^{\bullet}$ be a $\mathcal{I}$-torsion-free complex, we first observe that there is a canonical filtration
\[
\eta_{\mathcal{I},m} K^{\bullet} \otimes_{\mathcal{O}_T} \mathcal{I}  \subset \eta_{\mathcal{I},m+1} K^{\bullet} 
\subset
\eta_{\mathcal{I},m} K^{\bullet},
\]
which in particular explains why $\eta_{\mathcal{I},m} K^{\bullet}/\eta_{\mathcal{I},m+1} K^{\bullet}$
is an $\mathcal{O}_T / \mathcal{I}$-complex. We have identified the second graded piece, and the first graded piece can also be computed as follows. 

\begin{proposition} \label{modI}
Let $K^{\bullet}$ be a $\mathcal{I}$-torsion-free complex, then we have a canonical quasi-isomorphism
\[
\eta_{\mathcal{I},m+1} K^{\bullet}
/
\eta_{\mathcal{I},m} K^{\bullet} \otimes_{\mathcal{O}_T} \mathcal{I} 
\cong 
F_{m+1} H^{\bullet}( K/\mathcal{I})
\]
where $F_{m+1}$ is the Hodge filtration of complexes, i.e. 
\[
(F_{m+1} C^{\bullet})^i =
\begin{cases}
      C^i  & i \geq m+1 \\
      0 & i \leq m.
    \end{cases}  
\]
\end{proposition}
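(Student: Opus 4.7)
The strategy is to compute $Q := \eta_{\mathcal{I},m+1} K^\bullet/(\eta_{\mathcal{I},m} K^\bullet \otimes_{\mathcal{O}_T} \mathcal{I})$ term by term, construct a natural chain map $\phi \colon Q \to F_{m+1} H^\bullet(K/\mathcal{I})$, and verify it is a quasi-isomorphism by comparing cohomology in each degree. Setting $Z_i := \{x \in K^i \mid dx \in \mathcal{I} K^{i+1}\}$, a direct unwinding of the definitions of $\eta_{\mathcal{I},m+1}$ and $\eta_{\mathcal{I},m} \otimes \mathcal{I}$ yields $Q^i = 0$ for $i<m$, $Q^m = K^m/Z_m \otimes_{\mathcal{O}_T} \mathcal{I}^{\otimes m+1}$, and $Q^i = (\eta_\mathcal{I} K)^i/\mathcal{I}(\eta_\mathcal{I} K)^i$ for $i \geq m+1$. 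The differential $Q^m \to Q^{m+1}$ is induced by $d$, and in degrees $\geq m+1$ it coincides with the differential of $(\eta_\mathcal{I} K)/\mathcal{I}$, which equals the Bockstein under \cite{Bhatt2018} Proposition 6.12.

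I then define $\phi$ to be zero in degree $m$ and, in each degree $i \geq m+1$, to be the natural surjection $(\eta_\mathcal{I} K)^i/\mathcal{I}(\eta_\mathcal{I} K)^i \twoheadrightarrow H^i(K/\mathcal{I}) \otimes \mathcal{I}^{\otimes i}$ coming from Proposition 6.12. Compatibility with the differentials in degrees $\geq m+1$ is precisely the identification with the Bockstein; commutation across the boundary from $Q^m$ is automatic, as the image $dK^m$ consists of manifest boundaries in $K/\mathcal{I}$ and so vanishes in $H^{m+1}(K/\mathcal{I})$. Comparing cohomology, the degrees $i<m$ are trivial; in degree $m$, a cocycle $\bar x \in K^m/Z_m$ demands $dx \in \mathcal{I}(\eta_\mathcal{I} K)^{m+1} \subset \mathcal{I} K^{m+1}$, forcing $x \in Z_m$ and hence $H^m(Q)=0$; in degrees $i > m+1$ the isomorphism reduces directly to \cite{Bhatt2018} Proposition 6.12, since $Q$ and $(\eta_\mathcal{I} K)/\mathcal{I}$ agree in those degrees and the boundaries contributing only see terms $\geq m+1$.

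The essential case, and the main obstacle, is $i = m+1$, where I must show $\phi$ induces an isomorphism $H^{m+1}(Q) \xrightarrow{\sim} \ker(\text{Bockstein} \colon H^{m+1}(K/\mathcal{I}) \to H^{m+2}(K/\mathcal{I}))$. Unwinding, $H^{m+1}(Q) = \{x \in Z_{m+1} \mid dx \in \mathcal{I}^2 K^{m+2}\}/(dK^m + \mathcal{I} Z_{m+1})$. Working locally with a generator $\xi$ of $\mathcal{I}$, for surjectivity I take a Bockstein-killed class $[z]$ with lift $z \in Z_{m+1}$ and write $dz = \xi w$; the vanishing of the Bockstein provides a decomposition $w = du + \xi v$ with $u \in K^{m+1}$, $v \in K^{m+2}$, and then $z' := z - \xi u \in Z_{m+1}$ satisfies the refined condition $dz' = \xi^2 v \in \mathcal{I}^2 K^{m+2}$ while remaining congruent to $z$ modulo $\mathcal{I} K^{m+1}$. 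For injectivity, given $x \in Z_{m+1}$ with $dx \in \mathcal{I}^2 K^{m+2}$ and $x = du + \xi v$ for $u \in K^m$, $v \in K^{m+1}$, the identity $\xi\, dv = dx$ together with the $\mathcal{I}$-torsion-freeness of $K^{m+2}$ forces $dv \in \mathcal{I} K^{m+2}$, so $v \in Z_{m+1}$, whence $\xi v \in \mathcal{I} Z_{m+1}$ and $\bar x = 0$ in $H^{m+1}(Q)$. The careful juggling of lifts and repeated exploitation of $\mathcal{I}$-torsion-freeness to sharpen cocycle conditions constitute the main technical content of the proof.
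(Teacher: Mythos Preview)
Your proof is correct and follows essentially the same route as the paper's. Both compute the quotient $Q$ term by term, verify $H^m(Q)=0$ by injectivity of $Q^m\to Q^{m+1}$, invoke \cite{Bhatt2018} Proposition 6.12 in degrees $\geq m+2$, and handle degree $m+1$ by an explicit lift-and-correct argument using the Bockstein. The only cosmetic difference is that the paper maps $Q$ into an intermediate complex with terms $B^{m+1}(K/\mathcal{I})$, $Z^{m+1}(K/\mathcal{I})$, $H^{m+2}(K/\mathcal{I}),\ldots$ (a small resolution of $F_{m+1}H^\bullet$), whereas you map directly to $F_{m+1}H^\bullet(K/\mathcal{I})$ with $\phi^m=0$; your surjectivity step at $m+1$, where you replace $z$ by $z'=z-\xi u$ to force $dz'\in\mathcal{I}^2K^{m+2}$, is in fact stated more precisely than the paper's version of the same move.
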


\begin{proof}
We see immediately from the definition that the degree $\geq m+1$ (resp. $\leq m-1$) part of the left hand side is $(\eta_{\mathcal{I}} K^{\bullet})^i \otimes_{\mathcal{O}_T} \mathcal{O}_T/\mathcal{I}$ (resp. 0).  
It remains to identify the degree $m$-part of the LHS. 

By definition 
\[
(\eta_{\mathcal{I},m+1} K^{\bullet}
/
\eta_{\mathcal{I},m} K^{\bullet} \otimes_{\mathcal{O}_T} \mathcal{I})^m =
\frac{K^m }{ \{ x \in K^m | dx \in \mathcal{I} K^{m+1} \}  } 
\otimes_{\mathcal{O}_T} \mathcal{I}^{\otimes m+1},
\]
which is identified through the differential with 
\[
B^{m+1}(K^{\bullet} \otimes_{\mathcal{O}_T} \mathcal{O}_T / \mathcal{I}) \otimes_{\mathcal{O}_T} \mathcal{I}^{\otimes m+1}
\]
where $B^{m+1}(K^{\bullet} \otimes_{\mathcal{O}_T} \mathcal{O}_T / \mathcal{I})$
is the image of $(K^{\bullet} \otimes_{\mathcal{O}_T} \mathcal{O}_T / \mathcal{I})^m$ inside 
$(K^{\bullet} \otimes_{\mathcal{O}_T} \mathcal{O}_T / \mathcal{I})^{m+1}$. 

We first prove that the $m$-th cohomology of the LHS is trivial. This is equivalent to the differential map
\[
\frac{K^m }{ \{ x \in K^m | dx \in \mathcal{I} K^{m+1} \}  } 
\otimes_{\mathcal{O}_T} \mathcal{I}^{\otimes m+1} 
\longrightarrow
(\eta_{\mathcal{I}} K^{\bullet})^{m+1} \otimes_{\mathcal{O}_T} \mathcal{O}_T/\mathcal{I}
\]
being injective. But this is clear, if $x \in K^m$ is mapped to $\mathcal{I}  \{ y \in K^{m+1} | dy \in \mathcal{I} K^{m+2} \}  $, namely $dx \in \mathcal{I}  \{ y \in K^{m+1} | dy \in \mathcal{I} K^{m+2} \}  \subset \mathcal{I} K^{m+1}$, proving that $\Bar{x}$ is zero in the left hand side. 

We can now proceed exactly as in the proof of 
 \cite{Bhatt2018} proposition 6.12. We have canonical maps 
 \[
 (\eta_{\mathcal{I}} K^{\bullet})^{i} \otimes_{\mathcal{O}_T} \mathcal{O}_T/\mathcal{I} 
 \longrightarrow 
 Z^i(K^{\bullet} \otimes_{\mathcal{O}_T} \mathcal{O}_T/\mathcal{I}) \otimes_{\mathcal{O}_T} \mathcal{I}^{\otimes i} 
 \]
 which induces the quasi-isomorphism
\[
\eta_{\mathcal{I}} K^{\bullet} \otimes_{\mathcal{O}_T} \mathcal{O}_T/\mathcal{I} 
\cong 
H^{\bullet} (K/\mathcal{I}). 
\]
We have a commutative diagram
\[
\begin{tikzcd}
\vdots
\arrow[r]
&
\vdots
\\
(\eta_{\mathcal{I}} K)^{m+3}
\otimes_{\mathcal{O}_T} \mathcal{O}_T/\mathcal{I}
\arrow[r]
\arrow[u]
& 
H^{m+3}(K^{\bullet} \otimes_{\mathcal{O}_T} \mathcal{O}_T/\mathcal{I}) \otimes_{\mathcal{O}_T} \mathcal{I}^{\otimes m+3} 
\arrow[u]
\\
(\eta_{\mathcal{I}} K)^{m+2}
\otimes_{\mathcal{O}_T} \mathcal{O}_T/\mathcal{I}
\arrow[r]
\arrow[u]
& 
H^{m+2}(K^{\bullet} \otimes_{\mathcal{O}_T} \mathcal{O}_T/\mathcal{I}) \otimes_{\mathcal{O}_T} \mathcal{I}^{\otimes m+2} 
\arrow[u]
\\
(\eta_{\mathcal{I}} K)^{m+1}
\otimes_{\mathcal{O}_T} \mathcal{O}_T/\mathcal{I}
\arrow[u]
\arrow[r]
&
Z^{m+1}(K^{\bullet} \otimes_{\mathcal{O}_T} \mathcal{O}_T/\mathcal{I}) \otimes_{\mathcal{O}_T} \mathcal{I}^{\otimes m+1} 
\arrow[u]
\\
\frac{K^m }{ \{ x \in K^m | dx \in \mathcal{I} K^{m+1} \}  } 
\otimes_{\mathcal{O}_T} \mathcal{I}^{\otimes m+1}
\arrow[u]
\arrow[r, "d","\sim"']
&
B^{m+1}(K^{\bullet} \otimes_{\mathcal{O}_T} \mathcal{O}_T / \mathcal{I}) \otimes_{\mathcal{O}_T} \mathcal{I}^{\otimes m+1}
\arrow[u]
\\
0
\arrow[r]
\arrow[u]
&
0
\arrow[u]
\end{tikzcd}
\]
where the differential on right hand side is induced from the Bockstein map, except the lowest one, which is the canonical inclusion. 

It induces a quasi-isomorphism in degrees $\geq m+2$ by \cite{Bhatt2018} proposition 6.12, and we have seen that the degree $m$ cohomology of both sides are trivial. Thus it remains to show that it induces a quasi-isomorphism in degree $m+1$, this is the same as the proof of $loc.cit.$.  

We first prove that the map on $m+1$-th cohomology is surjective. Let $\Bar{x}
\in Z^{m+1}(K^{\bullet} \otimes_{\mathcal{O}_T} \mathcal{O}_T/\mathcal{I})$  
be a cocycle under Bockstein map, then 
by definition there exists a lift $x \in K^{m+1}$ of $\Bar{x}$ together with $y \in \mathcal{I} K^{m+1}$
such that $dx \equiv dy \ \text{mod} \ \mathcal{I}^2 K^{m+2}$. This implies that
$x \in  \{ x \in K^{m+1} | dx \in \mathcal{I} K^{m+2} \} $. Moreover, it tells us that 
$dx \in \mathcal{I} \{ x \in K^{m+2} | dx \in \mathcal{I} K^{m+3} \} $, which means $x$ defines a cocycle on the LHS that maps to $\bar{x}$. 

Now we show that the map on cohomology is injective. Let 
$x \in \{ x \in K^{m+1} | dx \in \mathcal{I} K^{m+2} \} $  whose reduction in 
$\{ x \in K^{m+1} | dx \in \mathcal{I} K^{m+2} \}/ \mathcal{I} \{ x \in K^{m+1} | dx \in \mathcal{I} K^{m+2} \}$
is a cocycle on the LHS. Moreover, we assume that its reduction $\bar{x} \in K^{m+1}/\mathcal{I} 
$ 
lies in 
$B^{m+1}(K^{\bullet} \otimes_{\mathcal{O}_T} \mathcal{O}_T / \mathcal{I})$. 
By the isomorphism of the lower horizontal map, there exists $y \in K^m$ such that 
$dy \equiv x \ \text{mod} \ \mathcal{I} K^{m+1}$. The cocycle condition implies that $dx \in \mathcal{I}^2 \{ x \in K^{m+2} | dx \in \mathcal{I} K^{m+3} \}$, but this  implies that 
$dy \equiv x \ \text{mod} \ \mathcal{I} \{ x \in K^{m+1} | dx \in \mathcal{I} K^{m+2} \}$, i.e. $x$ defines a trivial cohomological class on the LHS.  
\end{proof}

\begin{corollary} \label{corollary}
Let $K^{\bullet}$ be a $\mathcal{I}$-torsion-free complex, then
\[
H^i(\eta_{\mathcal{I},m} K^{\bullet} \otimes_{\mathcal{O}_T} \mathcal{O}_T /\mathcal{I} )
\cong 
\begin{cases}
      H^i(H^{\bullet}(K/\mathcal{I}))  
      & i \geq m+1 
      \\
    Z^m(H^{\bullet}(K/\mathcal{I}))  
    & i=m
      \\
      H^i(K^{\bullet} \otimes_{\mathcal{O}_T} \mathcal{O}_T/\mathcal{I}) \otimes_{\mathcal{O}_T} \mathcal{I}^{\otimes m} 
      & i \leq m-1.
    \end{cases}  
\]

Moreover, the connecting morphism
\[
\eta_{\mathcal{I},m} K^{\bullet}/\eta_{\mathcal{I},m+1} K^{\bullet}
\longrightarrow
\eta_{\mathcal{I},m+1} K^{\bullet}/
(\eta_{\mathcal{I},m} K^{\bullet} \otimes_{\mathcal{O}_T} \mathcal{I} )[1]
\]
of the distinguished triangle
\[
\eta_{\mathcal{I},m+1} K^{\bullet}/
(\eta_{\mathcal{I},m} K^{\bullet} \otimes_{\mathcal{O}_T} \mathcal{I} )
\longrightarrow
\eta_{\mathcal{I},m} K^{\bullet}/
(\eta_{\mathcal{I},m} K^{\bullet} \otimes_{\mathcal{O}_T} \mathcal{I} )
\longrightarrow
\eta_{\mathcal{I},m} K^{\bullet}/\eta_{\mathcal{I},m+1} K^{\bullet}
\longrightarrow [1]
\]
factorizes as
\[
\begin{tikzcd}
\eta_{\mathcal{I},m} K^{\bullet}/\eta_{\mathcal{I},m+1} K^{\bullet}
\arrow[r]
\arrow[d]
&
(\eta_{\mathcal{I},m+1} K^{\bullet}/
(\eta_{\mathcal{I},m} K^{\bullet} \otimes_{\mathcal{O}_T} \mathcal{I} ))[1]
\\
 H^{\bullet}(K/\mathcal{I})^m [-m] 
 \arrow[r,"\beta"]
&
Z^{m+1}(H^{\bullet}(K/\mathcal{I})) [-m]
\arrow[u]
\end{tikzcd}
\]
where $\beta$ is the differential of the complex $H^{\bullet}(K/\mathcal{I})$, namely the Bockstein map. 

Lastly, the long exact sequence associated to the distinguished triangle gives 
\[
0 \longrightarrow 
H^m(\eta_{\mathcal{I},m} K^{\bullet} \otimes_{\mathcal{O}_T} \mathcal{O}_T /\mathcal{I})
\longrightarrow
H^m(\eta_{\mathcal{I},m} K^{\bullet}/\eta_{\mathcal{I},m+1} K^{\bullet})
\]
\[
\longrightarrow
H^{m+1}( \eta_{\mathcal{I},m+1} K^{\bullet}/
(\eta_{\mathcal{I},m} K^{\bullet} \otimes_{\mathcal{O}_T} \mathcal{I} ))
\longrightarrow 
H^{m+1}( \eta_{\mathcal{I},m} K^{\bullet} \otimes_{\mathcal{O}_T} \mathcal{O}_T /\mathcal{I}) 
\longrightarrow 0,
\]
which is identified with
\[
0 \longrightarrow 
Z^m(H^{\bullet}(K/\mathcal{I}))
\longrightarrow
H^{\bullet}(K/\mathcal{I})^m
\overset{\beta}{\longrightarrow}
Z^{m+1}(H^{\bullet}(K/\mathcal{I}))
\longrightarrow 
H^{m+1}( H^{\bullet}(K/\mathcal{I})) 
\longrightarrow 0.
\]
\end{corollary}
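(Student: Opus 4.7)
The plan is to extract all three parts of the corollary from the distinguished triangle displayed in the statement, whose outer terms we can identify via the results of the section. Since $\eta_{\mathcal{I},m} K^{\bullet}$ is $\mathcal{I}$-torsion-free, the middle term $\eta_{\mathcal{I},m} K^{\bullet}/(\eta_{\mathcal{I},m} K^{\bullet}\otimes_{\mathcal{O}_T}\mathcal{I})$ is canonically $\eta_{\mathcal{I},m} K^{\bullet}\otimes^L_{\mathcal{O}_T}\mathcal{O}_T/\mathcal{I}$. By Proposition \ref{modI} the first term is $F_{m+1}H^{\bullet}(K/\mathcal{I})$, and by Lemma \ref{graded} the third term is $\tau_{\leq m}(K^{\bullet}\otimes^L_{\mathcal{O}_T}\mathcal{O}_T/\mathcal{I})\otimes_{\mathcal{O}_T}\mathcal{I}^{\otimes m}$.

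Next I would read off the cohomology groups from the associated long exact sequence. For $i\geq m+2$ only the first term contributes and gives $H^i(H^{\bullet}(K/\mathcal{I}))$; for $i\leq m-1$ only the third term contributes and gives $H^i(K^{\bullet}\otimes^L_{\mathcal{O}_T}\mathcal{O}_T/\mathcal{I})\otimes_{\mathcal{O}_T}\mathcal{I}^{\otimes m}$. In the intermediate degrees the long exact sequence produces exactly the four-term fragment of the statement, flanked by the vanishings $H^m(F_{m+1}H^{\bullet}(K/\mathcal{I}))=0$ and $H^{m+1}(\tau_{\leq m}(K/\mathcal{I})\otimes\mathcal{I}^{\otimes m})=0$, \emph{provided} the connecting map is the Bockstein $\beta$.

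The main obstacle is precisely this identification of the connecting map with $\beta$. I would carry it out at the chain level. A local cocycle of $\eta_{\mathcal{I},m} K^{\bullet}/\eta_{\mathcal{I},m+1} K^{\bullet}$ in degree $m$ is represented by $x\in K^m$ with $dx\in \mathcal{I} K^{m+1}$ (tensored with a local generator of $\mathcal{I}^{\otimes m}$); lifting $x$ to the middle quotient and applying the differential produces $dx$, which lies in the subobject and represents the class of $dx\bmod \mathcal{I}^2 K^{m+1}$, twisted by $\mathcal{I}^{\otimes m+1}$. Under the identification of Proposition \ref{modI} with $F_{m+1}H^{\bullet}(K/\mathcal{I})$ this is, by the very construction of $\beta$ from $0\to \mathcal{I}/\mathcal{I}^2\to \mathcal{O}_T/\mathcal{I}^2\to\mathcal{O}_T/\mathcal{I}\to 0$, exactly $\beta(\bar x)$; the containment in $Z^{m+1}(H^{\bullet}(K/\mathcal{I}))$ is forced by $d^2=0$ in $K^{\bullet}$. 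Functoriality of cohomology promotes this local computation to the factorization in $D(\mathcal{O}_T)$ asserted in the corollary.

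Once the connecting map is identified with $\beta$, the last assertion of the corollary is immediate: it is the appropriate fragment of the long exact sequence of the distinguished triangle, read off using the cohomology identifications above.
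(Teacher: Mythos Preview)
Your proposal is correct and follows essentially the same route as the paper: identify the outer terms of the triangle via Proposition~\ref{modI} and Lemma~\ref{graded}, read off the cohomology from the long exact sequence, and compute the connecting map at the chain level to see it is the Bockstein. One point deserves a sharper justification: the \emph{factorization} in $D(\mathcal{O}_T)$ through $H^{\bullet}(K/\mathcal{I})^m[-m]$ and $Z^{m+1}(H^{\bullet}(K/\mathcal{I}))[-m]$ is not a consequence of ``functoriality of cohomology'' but of a t-structure argument the paper makes explicit---the source $\tau_{\leq m}(K/\mathcal{I})\otimes\mathcal{I}^{\otimes m}$ lies in $D_{\leq m}$ and the shifted target $F_{m+1}H^{\bullet}(K/\mathcal{I})[1]$ lies in $D_{\geq m}$, so any morphism between them is determined by, and factors through, its effect on $H^m$. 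Once that is said, your chain-level computation of $H^m$ of the connecting map coincides with the paper's diagram chase.
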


\begin{proof}
The statement follows immediately from the long exact sequence associated to the distinguished triangle together with proposition \ref{modI} and lemma \ref{graded}, once we have identified the connecting morphism. 

The factorization of the connecting morphism follows from an easy t-structure argument, by noting that the source (resp. target) lies in $D_{\leq m}(\mathcal{O}_T)$ (resp. $D_{\geq m}(\mathcal{O}_T)$).  We now prove that the factorization $\beta$ is the Bockstein differential map. This follows easily from a diagram chasing. We write down the diagram of the distinguished triangle
\[
\begin{tikzcd}
\vdots
\arrow[r]
&
\vdots
\arrow[r]
& 
0
\\
(\eta_{\mathcal{I}} K)^{m+2}
\otimes_{\mathcal{O}_T} \mathcal{O}_T/\mathcal{I}
\arrow[r,equal]
\arrow[u]
& 
(\eta_{\mathcal{I}} K)^{m+2}
\otimes_{\mathcal{O}_T} \mathcal{O}_T/\mathcal{I}
\arrow[u]
\arrow[r]
&
0 
\arrow[u]
\\
(\eta_{\mathcal{I}} K)^{m+1}
\otimes_{\mathcal{O}_T} \mathcal{O}_T/\mathcal{I}
\arrow[u]
\arrow[r,equal]
&
(\eta_{\mathcal{I}} K)^{m+1}
\otimes_{\mathcal{O}_T} \mathcal{O}_T/\mathcal{I}
\arrow[u]
\arrow[r]
&
0
\arrow[u]
\\
\frac{K^m }{ \{ x \in K^m | dx \in \mathcal{I} K^{m+1} \}  } 
\otimes_{\mathcal{O}_T} \mathcal{I}^{\otimes m+1}
\arrow[u]
\arrow[r, ]
&
(\eta_{\mathcal{I}} K)^{m}
\otimes_{\mathcal{O}_T} \mathcal{O}_T/\mathcal{I}
\arrow[u]
\arrow[r]
&
Z^m(K^{\bullet}/\mathcal{I}K^{\bullet})
\otimes_{\mathcal{O}_T}  \mathcal{I}^{\otimes m}
\arrow[u]
\\
0
\arrow[r]
\arrow[u]
&
 (K^{m-1}/\mathcal{I}K^{m-1} )\otimes_{\mathcal{O}_T} \mathcal{I}^{\otimes m}
\arrow[u]
\arrow[r,equal]
&
 (K^{m-1}/\mathcal{I}K^{m-1} )\otimes_{\mathcal{O}_T} \mathcal{I}^{\otimes m}.
 \arrow[u]
\end{tikzcd}
\]
Given $\bar{x} \in Z^m(K^{\bullet}/\mathcal{I}K^{\bullet})$
representing an element of $H^{\bullet}(K/\mathcal{I})^m$, we choose a lift $x \in \{ x \in K^m | dx \in \mathcal{I} K^{m+1} \}$ 
of it. Then $\beta(\bar{x})$ is represented by $dx$ 
viewed as an element of
$ \mathcal{I} K^{m+1} / \mathcal{I} \{ x \in K^{m+1} | dx \in \mathcal{I} K^{m+2} \}$, but this clearly also represents the Bockstein map of $\bar{x}$. 
\end{proof}

We have another natural inclusions
\[
\eta_{\mathcal{I},m+1} K^{\bullet} \otimes_{\mathcal{O}_T} \mathcal{I}
\subset
\eta_{\mathcal{I},m} K^{\bullet} \otimes_{\mathcal{O}_T} \mathcal{I}  \subset \eta_{\mathcal{I},m+1} K^{\bullet} 
\]
giving rise to the distinguished triangle
\[
(\eta_{\mathcal{I},m} K^{\bullet}/\eta_{\mathcal{I},m+1} K^{\bullet}) (1)
\longrightarrow
\eta_{\mathcal{I},m+1} K^{\bullet}/
(\eta_{\mathcal{I},m+1} K^{\bullet} \otimes_{\mathcal{O}_T} \mathcal{I} )
\longrightarrow
\eta_{\mathcal{I},m+1} K^{\bullet}/
(\eta_{\mathcal{I},m} K^{\bullet} \otimes_{\mathcal{O}_T} \mathcal{I} )
\longrightarrow [1]
\]
with connecting morphism
\[
F_{m+1}H^{\bullet}(K/\mathcal{I})
\cong 
\eta_{\mathcal{I},m+1} K^{\bullet}/
(\eta_{\mathcal{I},m} K^{\bullet} \otimes_{\mathcal{O}_T} \mathcal{I} )
\longrightarrow 
(\eta_{\mathcal{I},m} K^{\bullet}/\eta_{\mathcal{I},m+1} K^{\bullet}) (1)[1]
\cong 
\tau_{\leq m} (K^{\bullet} /\mathcal{I}) (m+1) [1],
\]
which has to be zero since the source and target sit in different cohomological degrees. Thus we have a splitting
\[
\eta_{\mathcal{I},m+1} K^{\bullet}/
(\eta_{\mathcal{I},m+1} K^{\bullet} \otimes_{\mathcal{O}_T} \mathcal{I} ) 
\cong 
(\eta_{\mathcal{I},m} K^{\bullet}/\eta_{\mathcal{I},m+1} K^{\bullet}) (1)
\oplus 
\eta_{\mathcal{I},m+1} K^{\bullet}/
(\eta_{\mathcal{I},m} K^{\bullet} \otimes_{\mathcal{O}_T} \mathcal{I} ).
\]
Then proposition \ref{modI} and lemma \ref{graded} identifies the direct summand

\begin{corollary} \label{jdiiii}
Let $K^{\bullet}$ be a $\mathcal{I}$-torsion-free complex, we have an identification
\[
\eta_{\mathcal{I},m+1} K^{\bullet}/
(\eta_{\mathcal{I},m+1} K^{\bullet} \otimes_{\mathcal{O}_T} \mathcal{I} ) 
\cong 
(\tau_{\leq m} K^{\bullet} \otimes_{\mathcal{O}_T} \mathcal{O}_T/   \mathcal{I}) (m+1)
\oplus
F_{m+1} H^{\bullet} (K^{\bullet}/\mathcal{I}),
\]
which splits the distinguished triangle
\[
(\tau_{\leq m} K^{\bullet} \otimes_{\mathcal{O}_T} \mathcal{O}_T/   \mathcal{I}) (m+1)
\longrightarrow
\eta_{\mathcal{I},m+1} K^{\bullet}/
(\eta_{\mathcal{I},m+1} K^{\bullet} \otimes_{\mathcal{O}_T} \mathcal{I} )
\longrightarrow
F_{m+1} H^{\bullet} (K^{\bullet}/\mathcal{I})
\longrightarrow [1].
\]

Moreover,  the distinguished triangle
\[
\eta_{\mathcal{I},m+1} K^{\bullet}/
(\eta_{\mathcal{I},m} K^{\bullet} \otimes_{\mathcal{O}_T} \mathcal{I} )
\longrightarrow
\eta_{\mathcal{I},m} K^{\bullet}/
(\eta_{\mathcal{I},m} K^{\bullet} \otimes_{\mathcal{O}_T} \mathcal{I} )
\longrightarrow
\eta_{\mathcal{I},m} K^{\bullet}/\eta_{\mathcal{I},m+1} K^{\bullet}
\longrightarrow [1]
\]
is compatible with the splitting in the sense that we have a commutative diagram
\[
\begin{tikzcd}
& (\tau_{\leq m-1} K^{\bullet} \otimes_{\mathcal{O}_T} \mathcal{O}_T/   \mathcal{I}) (m)
\arrow[d]
\arrow[rd,"b"]
& &
\\
F_{m+1} H^{\bullet} (K^{\bullet}/\mathcal{I})
\arrow[r]
\arrow[rd,"a"]
&
\eta_{\mathcal{I},m} K^{\bullet}/
(\eta_{\mathcal{I},m} K^{\bullet} \otimes_{\mathcal{O}_T} \mathcal{I} )
\arrow[r]
\arrow[d]
&
(\tau_{\leq m} K^{\bullet} \otimes_{\mathcal{O}_T} \mathcal{O}_T/   \mathcal{I}) (m)
\arrow[r]
\arrow[d,"d"]
&
\text{$[1]$}
\\
&
F_{m} H^{\bullet} (K^{\bullet}/\mathcal{I})
\arrow[d]
\arrow[r,"c"]
&
H^m(K^{\bullet} \otimes_{\mathcal{O}_T} \mathcal{O}_T/   \mathcal{I}) (m)[-m]
&
\\
& \text{$[1]$} & & 
\end{tikzcd}
\]
where the the arrows $a$ and $b$ are the canonical map corresponding to the Hodge filtration and standard truncated filtration respectively, while $c$ and $d$ are graded quotient maps corresponding to them. 
\end{corollary}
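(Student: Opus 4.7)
My plan is to first dispense with the splitting claim (the first part of the corollary), which is essentially immediate from the discussion preceding the corollary: Lemma~\ref{graded} identifies the first summand $(\eta_{\mathcal{I},m}K^{\bullet}/\eta_{\mathcal{I},m+1}K^{\bullet})(1)$ with $(\tau_{\leq m}(K^{\bullet}/\mathcal{I}))(m+1)$, while Proposition~\ref{modI} identifies the second summand $\eta_{\mathcal{I},m+1}K^{\bullet}/(\eta_{\mathcal{I},m}K^{\bullet}\otimes \mathcal{I})$ with $F_{m+1}H^{\bullet}(K^{\bullet}/\mathcal{I})$. For the commutative diagram, I observe first that its middle column requires the analogous splitting
\[
\eta_{\mathcal{I},m}K^{\bullet}/(\eta_{\mathcal{I},m}K^{\bullet}\otimes \mathcal{I}) \cong (\tau_{\leq m-1}(K^{\bullet}/\mathcal{I}))(m) \oplus F_m H^{\bullet}(K^{\bullet}/\mathcal{I}),
\]
which I obtain by repeating the preceding argument with $m$ replaced by $m-1$, using the readily verified inclusions $\eta_{\mathcal{I},m}\otimes \mathcal{I} \subset \eta_{\mathcal{I},m-1}\otimes \mathcal{I} \subset \eta_{\mathcal{I},m}$.

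Next, I would represent every arrow in the diagram by an explicit morphism of subcomplex quotients and read off the commutativities. Under the Proposition~\ref{modI}-identifications at indices $m$ and $m-1$, arrow $a$ corresponds to the natural map
\[
\eta_{\mathcal{I},m+1}/(\eta_{\mathcal{I},m}\otimes \mathcal{I}) \longrightarrow \eta_{\mathcal{I},m}/(\eta_{\mathcal{I},m-1}\otimes \mathcal{I}),
\]
induced by $\eta_{\mathcal{I},m+1}\subset \eta_{\mathcal{I},m}$ and $\eta_{\mathcal{I},m}\otimes \mathcal{I}\subset \eta_{\mathcal{I},m-1}\otimes \mathcal{I}$. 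This map visibly factors through $\eta_{\mathcal{I},m}/(\eta_{\mathcal{I},m}\otimes \mathcal{I})$, giving the upper triangular identity. The analogous argument using Lemma~\ref{graded} at the two consecutive indices identifies arrow $b$ with the natural map
\[
(\eta_{\mathcal{I},m-1}\otimes \mathcal{I})/(\eta_{\mathcal{I},m}\otimes \mathcal{I}) \longrightarrow \eta_{\mathcal{I},m}/\eta_{\mathcal{I},m+1},
\]
which likewise factors through $\eta_{\mathcal{I},m}/(\eta_{\mathcal{I},m}\otimes \mathcal{I})$. For the bottom-right square, both $c$ and $d$ are the graded-quotient projections of their respective filtrations (Hodge and canonical truncation), and the two compositions $\eta_{\mathcal{I},m}/(\eta_{\mathcal{I},m}\otimes \mathcal{I}) \to H^m(K^{\bullet}/\mathcal{I})(m)[-m]$ both compute, on cohomology in degree $m$, the inclusion $Z^m(H^{\bullet}(K^{\bullet}/\mathcal{I})) \hookrightarrow H^m(K^{\bullet}/\mathcal{I})$ furnished by Corollary~\ref{corollary}.

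The main technical obstacle is verifying the naturality of Proposition~\ref{modI}'s quasi-isomorphism across consecutive indices: one must chase the zigzag of quasi-isomorphisms through the intermediate cocycle/Bockstein complex and check termwise that the obvious inclusion-quotient map on the $\eta$-side corresponds precisely to the canonical Hodge-filtration inclusion on the $H^{\bullet}(K^{\bullet}/\mathcal{I})$-side. Once this naturality (and the analogous, easier, naturality of Lemma~\ref{graded}) is established, the remaining commutativities are read off from the explicit subcomplex representatives.
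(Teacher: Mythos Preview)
Your proposal is correct and matches the paper's approach. The paper proves the splitting exactly as you describe---via the paragraph immediately preceding the corollary, where the connecting morphism $F_{m+1}H^{\bullet}(K/\mathcal{I}) \to \tau_{\leq m}(K^{\bullet}/\mathcal{I})(m+1)[1]$ is observed to vanish for $t$-structure reasons, and then Lemma~\ref{graded} and Proposition~\ref{modI} identify the two summands.

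For the ``Moreover'' part, the paper actually gives no further argument: the commutative diagram is stated without proof, relying implicitly on the naturality of the identifications in Lemma~\ref{graded} and Proposition~\ref{modI}. Your plan to verify this naturality explicitly---by representing each arrow as a map between concrete subcomplex quotients and checking termwise that the zigzag of quasi-isomorphisms in Proposition~\ref{modI} is functorial in $m$---is the correct way to fill in what the paper leaves to the reader, and your identification of this as the main technical point is accurate.
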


\section{Proof}

We now start the proof of the theorem. We first observe that the Bialynicki-Birula type construction of Fil$_m$ is fundamentally on cohomology groups, whereas the Hodge-Tate filtration, in the classical construction, originates from a filtration on complexes. Thus a natural way to proceed is to upgrade $\text{Fil}_m$ to a filtration on complexes and then compare the two filtration on the derived category level. This is achieved by the $L\eta_{\xi,\bullet}$-operation introduced in the previous section. In some sense, we need to have a suitably derived Bialynicki-Birula
construction. 

Let $m \in \mathbb{Z}_{\geq 0}$, and we consider 
\[
L\eta_{\xi,m} \prism_{X/\mathbb{B}^+_{\text{dR}}}
\]
which is naturally a subcomplex of $L\eta_{\xi}\prism_{X/\mathbb{B}^+_{\text{dR}}}$. 
By lemma \ref{graded}, we have
\[
Gr^m  \prism_{X/\mathbb{B}^+_{\text{dR}}}:= 
L\eta_{\xi,m} \prism_{X/\mathbb{B}^+_{\text{dR}}}/
L\eta_{\xi,m+1} \prism_{X/\mathbb{B}^+_{\text{dR}}}
\footnote{The notation is most naturally interpreted as the cokernel in the stable infinity category enrichment of the derived category. Alternatively, we can interpret it as the $m$-th graded quotient of $L\eta_{\xi, \bullet} \prism_{X/\mathbb{B}^+_{\text{dR}}}$
in the filtered derived category. }
\cong 
\tau_{\leq m}
\overline{ \prism}_{X/\mathbb{B}^+_{\text{dR}}} (m)
\cong 
\tau_{\leq m} R\nu_* \hat{\mathcal{O}}_X (m),
\]
which implies that under the canonical map 
\[
L\eta_{\xi,m} \prism_{X/\mathbb{B}^+_{\text{dR}}} 
\longrightarrow 
\xi^m \prism_{X/\mathbb{B}^+_{\text{dR}}}
\longrightarrow 
\overline{ \prism}_{X/\mathbb{B}^+_{\text{dR}}}(m), 
\]
the image of
$H^i(X, L\eta_{\xi,m} \prism_{X/\mathbb{B}^+_{\text{dR}}})$
inside 
\[
H^i(X,\overline{ \prism}_{X/\mathbb{B}^+_{\text{dR}}})(m)
\cong 
H^{i}_{\text{ét}}(X,\mathbb{Q}_p) \otimes_{\mathbb{Q}_p} \mathbb{C} (m)
\]
is exactly ($m$-th Tate twist of) the $m$-th Hodge-Tate filtration. 

We observe that there is a natural map
\[
H^i(X, L\eta_{\xi,m} \prism_{X/\mathbb{B}^+_{\text{dR}}})
\longrightarrow
H^i(X, L\eta_{\xi} \prism_{X/\mathbb{B}^+_{\text{dR}}})
\cap
\xi^mH^i(X, \prism_{X/\mathbb{B}^+_{\text{dR}}} ), 
\]
and we want to prove that it induces an identification of
$H^i(X, L\eta_{\xi,m} \prism_{X/\mathbb{B}^+_{\text{dR}}})$
with 
\[
H^i(X, L\eta_{\xi} \prism_{X/\mathbb{B}^+_{\text{dR}}})
\cap
\xi^mH^i(X, \prism_{X/\mathbb{B}^+_{\text{dR}}} )
\cong
H^i_{\text{crys}}(X/\mathbb{B}^+_{\text{dR}}) \cap 
\xi^m
(H^{i}_{\text{ét}}(X,\mathbb{Q}_p) \otimes_{\mathbb{Q}_p} \mathbb{B}_{\text{dR}}^+)), 
\]
which finishes the proof.

\begin{proposition} \label{torsionfree}
$H^i(X, L\eta_{\xi,m} \prism_{X/\mathbb{B}^+_{\text{dR}}})$ 
is $\xi$-torsion-free for all $i, m \in \mathbb{Z}_{\geq 0}$. 
\end{proposition}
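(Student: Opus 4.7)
The plan is descending induction on $m$. For the base case, I use (\ref{reduction}) together with derived Nakayama for $\xi$ (using that $R\nu_*\hat{\mathcal{O}}_X$ has cohomological dimension $\leq d = \dim X$) to represent $\prism_{X/\mathbb{B}^+_{\text{dR}}}$ by a $\xi$-torsion-free complex concentrated in degrees $[0,d]$. For any $m > d$, the definition of $\eta_{\xi,m}$ then forces $L\eta_{\xi,m}\prism_{X/\mathbb{B}^+_{\text{dR}}} = \xi^m \prism_{X/\mathbb{B}^+_{\text{dR}}}$ as a complex, and combining with (\ref{proetale}) gives
\[
H^i(X_{\text{ét}}, L\eta_{\xi,m}\prism_{X/\mathbb{B}^+_{\text{dR}}}) \cong \xi^m\bigl(H^i_{\text{ét}}(X,\mathbb{Q}_p) \otimes_{\mathbb{Q}_p} \mathbb{B}^+_{\text{dR}}\bigr),
\]
which is manifestly $\xi$-torsion-free.

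For the inductive step, assume the statement for $m+1$. Lemma \ref{graded} supplies the short exact sequence of complexes
\[
0 \to L\eta_{\xi,m+1}\prism_{X/\mathbb{B}^+_{\text{dR}}} \to L\eta_{\xi,m}\prism_{X/\mathbb{B}^+_{\text{dR}}} \to \tau_{\leq m}R\nu_*\hat{\mathcal{O}}_X(m) \to 0.
\]
In the associated $X_{\text{ét}}$-cohomology long exact sequence, the connecting maps have image killed by $\xi$ (since the right term is a complex of $\mathbb{C}$-modules) and land in the $\xi$-torsion-free group $H^i(X, L\eta_{\xi,m+1}\prism_{X/\mathbb{B}^+_{\text{dR}}})$ by induction, so they vanish; this yields short exact sequences
\[
(\star) \qquad 0 \to H^i(X, L\eta_{\xi,m+1}\prism_{X/\mathbb{B}^+_{\text{dR}}}) \to H^i(X, L\eta_{\xi,m}\prism_{X/\mathbb{B}^+_{\text{dR}}}) \to H^i(X, \tau_{\leq m}R\nu_*\hat{\mathcal{O}}_X(m)) \to 0.
\]
Applying the snake lemma to the multiplication-by-$\xi$ endomorphism of $(\star)$, the $\xi$-torsion of the middle term is identified with the kernel of the boundary map
\[
\phi : H^i(X, \tau_{\leq m}R\nu_*\hat{\mathcal{O}}_X(m)) \to H^i(X, L\eta_{\xi,m+1}\prism_{X/\mathbb{B}^+_{\text{dR}}})/\xi, \qquad [c] \mapsto [\xi \widetilde{c}],
\]
where $\widetilde{c}$ is any lift of $[c]$ to $H^i(X, L\eta_{\xi,m}\prism_{X/\mathbb{B}^+_{\text{dR}}})$. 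It remains to show that $\phi$ is injective.

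For this, I would use the canonical injection $H^i(X, L\eta_{\xi,m+1}\prism_{X/\mathbb{B}^+_{\text{dR}}})/\xi \hookrightarrow H^i(X, L\eta_{\xi,m+1}\prism_{X/\mathbb{B}^+_{\text{dR}}}/\xi)$ (automatic from the $\xi$-multiplication long exact sequence) together with the splitting from Corollary \ref{jdiiii},
\[
L\eta_{\xi,m+1}\prism_{X/\mathbb{B}^+_{\text{dR}}}/\xi \cong \tau_{\leq m}R\nu_*\hat{\mathcal{O}}_X(m+1) \oplus F_{m+1}\Omega^\bullet_X,
\]
in which the first summand arises from the subcomplex $\xi L\eta_{\xi,m}\prism_{X/\mathbb{B}^+_{\text{dR}}} / \xi L\eta_{\xi,m+1}\prism_{X/\mathbb{B}^+_{\text{dR}}}$. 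For $[c]$ lifted to $[b]$, the element $\xi b$ visibly sits in $\xi L\eta_{\xi,m}\prism_{X/\mathbb{B}^+_{\text{dR}}}$, so its image in $L\eta_{\xi,m+1}\prism_{X/\mathbb{B}^+_{\text{dR}}}/\xi$ lies in the first summand; unwinding the canonical identification $\xi L\eta_{\xi,m}\prism/\xi L\eta_{\xi,m+1}\prism \cong (L\eta_{\xi,m}\prism/L\eta_{\xi,m+1}\prism) \otimes \xi \cong \tau_{\leq m}R\nu_*\hat{\mathcal{O}}_X(m+1)$, it recovers $[c]$ up to the canonical Tate twist. Thus $\phi$ composed with projection onto the first summand is a Tate-twist isomorphism, in particular injective, and so is $\phi$ itself. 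The main technical hurdle I anticipate is precisely this matching step: reconciling the snake-lemma description of $\phi$ with the splitting from Corollary \ref{jdiiii}, and keeping the Tate twists straight throughout.
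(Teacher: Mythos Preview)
Your proposal is correct and follows essentially the same strategy as the paper's first proof: descending induction on $m$, with the base case coming from the primitive comparison isomorphism and the inductive step relying on Corollary~\ref{jdiiii}. The organization of the inductive step differs slightly. The paper shows directly that the reduction map $H^i(X,L\eta_{\xi,m}\prism)\to H^i(X,L\eta_{\xi,m}\prism/\xi)$ is surjective by chasing two auxiliary surjections (the maps labeled $a$ and $h$ there), whereas you run the snake lemma on the short exact sequence $(\star)$ and identify the boundary $\phi$ explicitly: its composite with the projection to the first summand of the splitting in Corollary~\ref{jdiiii} is the Tate-twist isomorphism $C\xrightarrow{\sim}C(1)$, whence $\phi$ is injective. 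Your identification is valid because multiplication by $\xi$ factors at the complex level as $L\eta_{\xi,m}\prism\xrightarrow{\sim}\xi L\eta_{\xi,m}\prism\hookrightarrow L\eta_{\xi,m+1}\prism$, so the image of $\xi\widetilde{c}$ in $H^i(X,L\eta_{\xi,m+1}\prism/\xi)$ genuinely comes from $H^i(X,\xi L\eta_{\xi,m}\prism/\xi L\eta_{\xi,m+1}\prism)$, which is a direct summand. The two arguments use exactly the same ingredients and are equivalent reformulations; yours has the mild advantage of making transparent \emph{why} the obstruction vanishes (the snake boundary is, up to twist, the identity), while the paper's chase is a bit more hands-on with the diagram.
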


\begin{myproof}[First Proof]
We prove that 
$H^i(X, L\eta_{\xi,m} \prism_{X/\mathbb{B}^+_{\text{dR}}})$ 
is $\xi$-torsion-free by descending induction on $m$. The base case is when $m$ is big enough so we have 
$L\eta_{\xi,m} \prism_{X/\mathbb{B}^+_{\text{dR}}} = \prism_{X/\mathbb{B}^+_{\text{dR}}}(m)$, in which case the torsion-freeness follows from the primitive comparison theorem
\[
H^i(X, L\eta_{\xi,m} \prism_{X/\mathbb{B}^+_{\text{dR}}}) \cong 
H^i(X,\prism_{X/\mathbb{B}^+_{\text{dR}}}(m))
\cong
H^i_{\text{ét}}(X, \mathbb{Q}_p) \otimes_{\mathbb{Q}_p} \mathbb{B}^+_{\text{dR}}(m). 
\]

Now assume that 
$H^i(X, L\eta_{\xi,m+1} \prism_{X/\mathbb{B}^+_{\text{dR}}})$ 
is $\xi$-torsion-free, we want to prove that the same is true for 
$H^i(X, L\eta_{\xi,m} \prism_{X/\mathbb{B}^+_{\text{dR}}})$. 
Using the long exact sequence associated to
\[
 L\eta_{\xi,m} \prism_{X/\mathbb{B}^+_{\text{dR}}} \overset{\xi}{\longrightarrow }
L\eta_{\xi,m} \prism_{X/\mathbb{B}^+_{\text{dR}}}
\longrightarrow 
L\eta_{\xi,m} \prism_{X/\mathbb{B}^+_{\text{dR}}}/ \xi L\eta_{\xi,m} \prism_{X/\mathbb{B}^+_{\text{dR}}}
\longrightarrow [1],
\]
it is enough to prove that the natural map 
\[
H^i(X, L\eta_{\xi,m} \prism_{X/\mathbb{B}^+_{\text{dR}}})
\longrightarrow 
H^i(X, L\eta_{\xi,m} \prism_{X/\mathbb{B}^+_{\text{dR}}}/ \xi L\eta_{\xi,m} \prism_{X/\mathbb{B}^+_{\text{dR}}})
\]
is surjective.  

We have the commutative diagram
\[
\begin{tikzcd}
H^i(X,L\eta_{\xi,m+1} \prism_{X/\mathbb{B}^+_{\text{dR}}}/\xi L\eta_{\xi,m} \prism_{X/\mathbb{B}^+_{\text{dR}}})
\arrow[r,"b"]
&
H^i(X, L\eta_{\xi,m} \prism_{X/\mathbb{B}^+_{\text{dR}}}/\xi L\eta_{\xi,m} \prism_{X/\mathbb{B}^+_{\text{dR}}}) 
\arrow[r,"d"]
& 
H^{i} (X, Gr^m  \prism_{X/\mathbb{B}^+_{\text{dR}}})
\\
H^i(X, L\eta_{\xi,m+1} \prism_{X/\mathbb{B}^+_{\text{dR}}})
\arrow[u,"h"]
\arrow[r,"g"]
&
H^i(X, L\eta_{\xi,m} \prism_{X/\mathbb{B}^+_{\text{dR}}}) 
\arrow[r,twoheadrightarrow,"a"]
\arrow[u,"f"]
&
H^{i} (X, Gr^m  \prism_{X/\mathbb{B}^+_{\text{dR}}})
\arrow[u,equal]
\end{tikzcd}
\]
corresponding to the map of distinguished triangles
\[
\begin{tikzcd}
L\eta_{\xi,m+1} \prism_{X/\mathbb{B}^+_{\text{dR}}}/\xi L\eta_{\xi,m} \prism_{X/\mathbb{B}^+_{\text{dR}}}
\arrow[r]
&
 L\eta_{\xi,m} \prism_{X/\mathbb{B}^+_{\text{dR}}}/\xi L\eta_{\xi,m} \prism_{X/\mathbb{B}^+_{\text{dR}}} 
\arrow[r]
& 
 Gr^m  \prism_{X/\mathbb{B}^+_{\text{dR}}}
\\
 L\eta_{\xi,m+1} \prism_{X/\mathbb{B}^+_{\text{dR}}}
\arrow[u,]
\arrow[r,]
&
 L\eta_{\xi,m} \prism_{X/\mathbb{B}^+_{\text{dR}}}
\arrow[r,twoheadrightarrow,]
\arrow[u]
&
 Gr^m  \prism_{X/\mathbb{B}^+_{\text{dR}}}. 
\arrow[u,equal]
\end{tikzcd}
\]
We note that the map $a$ is surjective by our induction hypothesis. Namely, the connecting map \[
H^{i} (X, Gr^m  \prism_{X/\mathbb{B}^+_{\text{dR}}})
\longrightarrow
H^{i+1}(X, L\eta_{\xi,m+1} \prism_{X/\mathbb{B}^+_{\text{dR}}})
\]
is 0 since the right hand side is torsion free by induction hypothesis. 

We have that $h$ is surjective as well. This is again because of our inductive hypothesis that
$H^{i}(X, L\eta_{\xi,m+1} \prism_{X/\mathbb{B}^+_{\text{dR}}})$
is torsion-free, so the natural map
\[
p: 
H^{i}(X, L\eta_{\xi,m+1} \prism_{X/\mathbb{B}^+_{\text{dR}}})
\longrightarrow
H^{i}(X, L\eta_{\xi,m+1} \prism_{X/\mathbb{B}^+_{\text{dR}}}/\xi L\eta_{\xi,m+1} \prism_{X/\mathbb{B}^+_{\text{dR}}})
\]
is surjective, but corollary \ref{jdiiii} implies that 
\[
H^{i}(X, L\eta_{\xi,m+1} \prism_{X/\mathbb{B}^+_{\text{dR}}}/\xi L\eta_{\xi,m+1} \prism_{X/\mathbb{B}^+_{\text{dR}}})
\cong 
H^i(X, (Gr^m \prism_{X/\mathbb{B}^+_{\text{dR}}})) (1)
\oplus 
H^i(X,
L\eta_{\mathcal{I},m+1} \prism_{X/\mathbb{B}^+_{\text{dR}}}/\xi L\eta_{\mathcal{I},m} \prism_{X/\mathbb{B}^+_{\text{dR}}})
\]
composing the projection to second factor with $p$ implies that $h$
is surjective. 

Now we prove that $f$ is surjective, which finishes the proof. Given
$x \in H^i(X, L\eta_{\xi,m} \prism_{X/\mathbb{B}^+_{\text{dR}}}/\xi L\eta_{\xi,m} \prism_{X/\mathbb{B}^+_{\text{dR}}})$, 
we can find 
$y \in H^i(X, L\eta_{\xi,m} \prism_{X/\mathbb{B}^+_{\text{dR}}})$
such that 
$d(f(y)) = a(y) = d(x)$
by surjectivity of $a$. Thus
$x-f(y) \in \text{Ker}(d)=\text{Im}(b)$, so there exists
$w \in H^i(X,L\eta_{\xi,m+1} \prism_{X/\mathbb{B}^+_{\text{dR}}}/\xi L\eta_{\xi,m} \prism_{X/\mathbb{B}^+_{\text{dR}}})$
such that 
$b(w) =x-f(y)$, 
the surjectivity of $h$ tells us that 
$w= h(z)$
for $z \in H^i(X,L\eta_{\xi,m+1} \prism_{X/\mathbb{B}^+_{\text{dR}}})$, which implies that 
\[
x= f(y)+b(h(z)) = f(y + g(z)),
\]
proving the surjectivity of $f$.
\end{myproof}

\begin{myproof}[Second Proof]
We prove that 
$H^i(X, L\eta_{\xi,m} \prism_{X/\mathbb{B}^+_{\text{dR}}})$ 
is $\xi$-torsion-free by ascending induction on $m$. The base case $m=0$ is
\cite{Bhatt2018} theorem 13.19 since 
\[
L\eta_{\xi,0} \prism_{X/\mathbb{B}^+_{\text{dR}}} 
= L\eta_{\xi} \prism_{X/\mathbb{B}^+_{\text{dR}}},
\]
which follows from 
$\prism_{X/\mathbb{B}^+_{\text{dR}}} \in D_{\geq 0}$ and
$\xi$-torsion-freeness of $H^0(\prism_{X/\mathbb{B}^+_{\text{dR}}})$. 

Now assume that 
$H^i(X, L\eta_{\xi,m} \prism_{X/\mathbb{B}^+_{\text{dR}}})$ 
is $\xi$-torsion-free, we want to prove that the same is true for 
$H^i(X, L\eta_{\xi,m+1} \prism_{X/\mathbb{B}^+_{\text{dR}}})$. 
It is enough to prove that the natural map 
\[
H^i(X, L\eta_{\xi,m+1} \prism_{X/\mathbb{B}^+_{\text{dR}}})
\longrightarrow 
H^i(X, L\eta_{\xi,m} \prism_{X/\mathbb{B}^+_{\text{dR}}})
\]
is injective. Using the long exact sequence associated to the distinguished triangle
\begin{equation} \label{triangle}
L\eta_{\xi,m+1} \prism_{X/\mathbb{B}^+_{\text{dR}}} \longrightarrow 
L\eta_{\xi,m} \prism_{X/\mathbb{B}^+_{\text{dR}}}
\longrightarrow 
Gr^m  \prism_{X/\mathbb{B}^+_{\text{dR}}}
\longrightarrow [1],
\end{equation}
it is enough to show that 
\[
H^{i-1} (X, Gr^m  \prism_{X/\mathbb{B}^+_{\text{dR}}})
\longrightarrow 
H^i(X, L\eta_{\xi,m+1} \prism_{X/\mathbb{B}^+_{\text{dR}}})
\]
is 0. Note that the $i=0$ case is trivial, as the left hand side cohomology group is 0. 

We have the commutative diagram
\[
\begin{tikzcd}
H^{i-1} (X, Gr^m  \prism_{X/\mathbb{B}^+_{\text{dR}}})
\arrow[r,"\beta"]
&
H^i(X,L\eta_{\xi,m+1} \prism_{X/\mathbb{B}^+_{\text{dR}}}/\xi L\eta_{\xi,m} \prism_{X/\mathbb{B}^+_{\text{dR}}})
& 
\\
H^{i-1} (X, Gr^m  \prism_{X/\mathbb{B}^+_{\text{dR}}})
\arrow[r,"f"]
\arrow[u,equal]
& 
H^i(X, L\eta_{\xi,m+1} \prism_{X/\mathbb{B}^+_{\text{dR}}})
\arrow[u,"h"]
\arrow[r,"g"]
&
H^i(X, L\eta_{\xi,m} \prism_{X/\mathbb{B}^+_{\text{dR}}}) 
\\
&
H^i(X, \xi L\eta_{\xi,m} \prism_{X/\mathbb{B}^+_{\text{dR}}})
\arrow[ur, hook, "b"]
\arrow[u,"a"]
&
\end{tikzcd}
\]
where the middle row and column are exact, being part of the long exact sequence associated to  distinguished triangles. The first row is the connecting morphism as specified in corollary \ref{corollary}, and the square is commutative since it is induced by the morphism of distinguished triangles
\[
\begin{tikzcd}
L\eta_{\xi,m+1} \prism_{X/\mathbb{B}^+_{\text{dR}}}/\xi L\eta_{\xi,m} \prism_{X/\mathbb{B}^+_{\text{dR}}}
\arrow[r,""]
&
L\eta_{\xi,m} \prism_{X/\mathbb{B}^+_{\text{dR}}}/\xi L\eta_{\xi,m} \prism_{X/\mathbb{B}^+_{\text{dR}}}
\arrow[r]
&
Gr^m  \prism_{X/\mathbb{B}^+_{\text{dR}}}
\arrow[r]
&
\text{[1]}
\\
L\eta_{\xi,m+1} \prism_{X/\mathbb{B}^+_{\text{dR}}}
\arrow[r]
\arrow[u]
& 
 L\eta_{\xi,m} \prism_{X/\mathbb{B}^+_{\text{dR}}}
\arrow[u]
\arrow[r]
&
Gr^m  \prism_{X/\mathbb{B}^+_{\text{dR}}} 
\arrow[u] 
\arrow[r]
&
\text{[1]}.
\arrow[u]
\end{tikzcd}
\]
The arrow $b$ is injective by our inductive hypothesis, so $a$ is injective as well.  

We know from (\ref{derham}) and corollary \ref{corollary} that $\beta$ factorizes as 
\[
\begin{tikzcd}
H^{i-1} (X, Gr^m  \prism_{X/\mathbb{B}^+_{\text{dR}}})
\arrow[d,equal, "{\resizebox{0.8cm}{0.1cm}{$\sim$}}" labl]
\arrow[r,"\beta"] 
&
H^i(X,L\eta_{\xi,m+1} \prism_{X/\mathbb{B}^+_{\text{dR}}}/\xi L\eta_{\xi,m} \prism_{X/\mathbb{B}^+_{\text{dR}}})
\arrow[d,equal, "{\resizebox{0.8cm}{0.1cm}{$\sim$}}" labl]
\\
H^{i-1} (X, \tau_{\leq m}\overline{  \prism}_{X/\mathbb{B}^+_{\text{dR}}}) (m)
\arrow[r,"\beta"]
\arrow[d]
&
H^i(X,F_{m+1} \Omega_X^{\bullet})
\\
 H^{i-1}(X, \Omega_X^m[-m])
 \arrow[r,"d"]
&
H^{i-1}(X, Z^{m+1}\Omega_X^{\bullet}[-m])
\arrow[u]
\end{tikzcd}
\]
where $Z^{m+1}\Omega_X^{\bullet}$ is the sheaf of closed $m+1$-forms, and $d$ is the usual differential of de Rham complexes.  By the degeneration of Hodge-de Rham spectral sequence proved in \cite{Bhatt2018} theorem 13.3, we know that 
\[
H^{i-1}(X, \Omega_X^m) \overset{d}{\longrightarrow} 
H^{i-1}(X, \Omega_X^{m+1})
\]
is $0$, and we claim that this implies  that the composition
\[
H^{i-1}(X, \Omega_X^m[-m]) \overset{d}{\longrightarrow} 
H^{i-1}(X, Z^{m+1}\Omega_X^{\bullet}[-m]) 
\longrightarrow 
H^i(X,F_{m+1} \Omega_X^{\bullet}) \cong 
H^{i-1}(X,F_{m+1} \Omega_X^{\bullet}[1])
\]
is 0, so $\beta$ is 0 as well. 

Indeed, filtering both $ \Omega_X^m[-m]$ and $F_{m+1} \Omega_X^{\bullet}[1]$
by the Hodge filtration, we see that the map on the  first graded piece is 
$H^{i-1}(X, \Omega_X^m[-m]) \overset{d}{\longrightarrow} 
H^{i-1}(X, \Omega_X^{m+1}[-m])$
which we have seen to be 0. The map on other graded pieces are also 0, since $\Omega_X^m[-m]$ has only one non-zero graded piece, proving the claim. 

Now let $x \in H^{i-1} (X, Gr^m  \prism_{X/\mathbb{B}^+_{\text{dR}}})$, then 
$f(x) \in \text{Ker}(h)$ since $\beta =0$, so we have $f(x) = a(y)$ for some
$y \in H^i(X, \xi L\eta_{\xi,m} \prism_{X/\mathbb{B}^+_{\text{dR}}})$. Then $b(y)=g(a(y))=g(f(x))=0$, which implies that $y=0$ by injectivity of $b$, so $f(x)=a(y)=0$. Hence we have $f=0$, finishing the induction. 
\end{myproof}

\begin{remark}
The first proof has the advantage that it is independent of the Conrad-Gabber spreading theorem. In particular, we give a Conrad-Gabber independent proof the $\xi$-torsion-freeness of 
$H^i_{crys}(X,\mathbb{B}^+_{\text{dR}})$. 
See  \cite{Bhatt2018} 13.19 and \cite{Guo1} theorem 7.3.5 for proofs that involves the spreading theorem. 
\end{remark}

The proposition tells us that long exact sequence assoicated to the distinguished triangle (\ref{triangle}) splits into short exact sequences
\[
0 \longrightarrow
H^i(X, L\eta_{\xi,m+1} \prism_{X/\mathbb{B}^+_{\text{dR}}}) 
\longrightarrow
H^i(X, L\eta_{\xi,m} \prism_{X/\mathbb{B}^+_{\text{dR}}})
\longrightarrow
H^i(X, \tau_{\leq m}\overline{  \prism}_{X/\mathbb{B}^+_{\text{dR}}}) (m)
\longrightarrow 0,
\]
indeed, 
$H^i(X, \tau_{\leq m}\overline{  \prism}_{X/\mathbb{B}^+_{\text{dR}}}) (m)$
is $\xi$-torsion which admits no non-zero connecting morphism to the $\xi$-torsion-free module 
$H^{i+1}(X, L\eta_{\xi,m+1} \prism_{X/\mathbb{B}^+_{\text{dR}}}) $. 
In particular, 
\[
H^i(X, L\eta_{\xi,m} \prism_{X/\mathbb{B}^+_{\text{dR}}}) \hookrightarrow 
H^i(X, L\eta_{\xi,0} \prism_{X/\mathbb{B}^+_{\text{dR}}}) =
H^i(X, L\eta_{\xi} \prism_{X/\mathbb{B}^+_{\text{dR}}}),
\]
so the canonical map
\[
H^i(X, L\eta_{\xi,m} \prism_{X/\mathbb{B}^+_{\text{dR}}})
\longrightarrow
H^i(X, L\eta_{\xi} \prism_{X/\mathbb{B}^+_{\text{dR}}})
\cap
\xi^mH^i(X, \prism_{X/\mathbb{B}^+_{\text{dR}}} )
\]
is injective. 
We now prove that it is also surjective.

We proceed by induction on $m$. The base case $m=0$ is clear, as both sides are 
$H^i(X, L\eta_{\xi} \prism_{X/\mathbb{B}^+_{\text{dR}}})$. Assume that the map is surjective for $m$, we prove it is also surjective for $m+1$. We consider the commutative diagram 
\[
\begin{tikzcd}
0
\arrow[d]
&
0
\arrow[d]
\\
H^i(X, L\eta_{\xi,m+1} \prism_{X/\mathbb{B}^+_{\text{dR}}}) 
\arrow[r]
\arrow[d]
& 
H^i(X, L\eta_{\xi} \prism_{X/\mathbb{B}^+_{\text{dR}}})
\cap
\xi^{m+1}H^i(X, \prism_{X/\mathbb{B}^+_{\text{dR}}} )
\arrow[d]
\\
H^i(X, L\eta_{\xi,m} \prism_{X/\mathbb{B}^+_{\text{dR}}})
\arrow[r,"\sim"]
\arrow[d]
&
H^i(X, L\eta_{\xi} \prism_{X/\mathbb{B}^+_{\text{dR}}})
\cap
\xi^mH^i(X, \prism_{X/\mathbb{B}^+_{\text{dR}}} )
\arrow[d]
\\
H^i(X, \tau_{\leq m}\overline{  \prism}_{X/\mathbb{B}^+_{\text{dR}}}) (m)
\arrow[r,hook]
\arrow[d]
&
H^i(X, \overline{  \prism}_{X/\mathbb{B}^+_{\text{dR}}}) (m)
\\
0
&
\end{tikzcd}
\]
where the two vertical sequences are exact, the middle horizontal arrow is an isomorphism by our inductive hypothesis, and the bottom horizontal arrow is injective by the degeneration of Hodge-Tate spectral sequence (\cite{Bhatt2018} theorem 13.3). Now an easy diagram chasing 
proves that the first horizontal arrow is surjective, finishing the induction. 

\section{Miscellany}

We document an interesting byproduct in our treatment of Hodge-Tate filtration, namely we can give a conceptual explanation why the degeneration of Hodge-Tate spectral sequences is equivalent to that of Hodge-de Rham spectral sequences. The claim is clear  in the proof of  \cite{Bhatt2018} theorem 13.3, which is by dimension counting. Note that the torsion-freeness of $H^i_{\text{crys}}(X/\mathbb{B}^+_{\text{dR}})$
is used in $loc.cit.$ as a bridge between the dimension of the de Rham cohomology and étale cohomology. 

\begin{proposition}

Let $X$ be a proper smooth rigid analytic variety over a complete  algebraically closed non-archimedean field $\mathbb{C}$
of mixed characteristic $p$, the degeneration of the Hodge-Tate spectral sequence
\[
E_2^{p,q} = H^{p}(X,\Omega_{X}^q)(-q) \ \ \Longrightarrow H^{p+q}_{\text{ét}}(X,\mathbb{Q}_p) \otimes_{\mathbb{Q}_p} \mathbb{C}
\]
is equivalent to the degeneration of Hodge-de Rham spectral sequence 
\[
E_1^{p,q}=H^{q}(X,\Omega_{X}^p) \Longrightarrow H^{p+q}(X,\Omega_X^{\bullet}). 
\]
More precisely, in the natural diagram
\[
\begin{tikzcd}
H^i(X, \tau_{\leq m-1}\overline{\prism}_{X/\mathbb{B}^+_{\text{dR}}})(m) 
\arrow[r,"f"]
&
H^i(X, \tau_{\leq m}\overline{\prism}_{X/\mathbb{B}^+_{\text{dR}}})(m)
\arrow[r,]
&
H^{i}(X,\Omega_{X}^m[-m])
\\
H^{i}(X,F_{m+1}\Omega_{X}^{\bullet})
\arrow[r,"g"]
&
H^{i}(X,F_m\Omega_{X}^{\bullet})
\arrow[r]
&
H^{i}(X,\Omega_{X}^m[-m])
\arrow[u,equal]
\end{tikzcd}
\]
where $f$ and $g$ are the canonical maps 
 corresponding to the truncation filtration and Hodge filtration respectively, we have that
 \[
 Coker(f) = Coker(g) \subset H^{i}(X,\Omega_{X}^m[-m])
 \]
 as subspaces of $H^{i}(X,\Omega_{X}^m[-m])$. 
\end{proposition}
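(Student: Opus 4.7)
The plan is to introduce the intermediate complex $M_m := L\eta_{\xi,m}\prism_{X/\mathbb{B}^+_{\text{dR}}}\otimes^L_{\mathbb{B}^+_{\text{dR}}}\mathbb{B}^+_{\text{dR}}/\xi$ and to exploit the big commutative diagram of corollary \ref{jdiiii} applied to $K = \prism_{X/\mathbb{B}^+_{\text{dR}}}$. This $M_m$ sits simultaneously in a ``horizontal'' triangle $F_{m+1}\Omega_X^{\bullet} \to M_m \to \tau_{\leq m}\overline{\prism}_{X/\mathbb{B}^+_{\text{dR}}}(m)$ and a ``vertical'' triangle $\tau_{\leq m-1}\overline{\prism}_{X/\mathbb{B}^+_{\text{dR}}}(m) \to M_m \to F_m\Omega_X^{\bullet}$, whose graded quotient maps $d^*$ (horizontal) and $c^*$ (vertical) both land in $\Omega_X^m[-m]$. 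Taking cohomology yields the commutative square
\[
\begin{tikzcd}
H^i(X,M_m) \arrow[r,"p_1^*"] \arrow[d,"p_2^*"] & H^i(X,\tau_{\leq m}\overline{\prism}_{X/\mathbb{B}^+_{\text{dR}}})(m) \arrow[d,"d^*"] \\
H^i(X,F_m\Omega_X^{\bullet}) \arrow[r,"c^*"] & H^i(X,\Omega_X^m[-m]),
\end{tikzcd}
\]
and the vertical triangle splits in the derived category by the cohomological-amplitude argument of corollary \ref{jdiiii}, so $p_2^*$ is split surjective on cohomology.

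The inclusion $\text{Coker}(g) \subseteq \text{Coker}(f)$ is then formal: given $x = c^*(z) \in \text{Coker}(g)$, I lift $z$ along $p_2^*$ to some $\tilde z \in H^i(X,M_m)$, and commutativity of the square gives $x = c^*(p_2^*(\tilde z)) = d^*(p_1^*(\tilde z)) \in \text{Coker}(f)$. For the reverse inclusion, I would invoke corollary \ref{corollary} which identifies the connecting morphism $\partial^h : H^i(X,\tau_{\leq m}\overline{\prism}_{X/\mathbb{B}^+_{\text{dR}}})(m) \to H^{i+1}(X,F_{m+1}\Omega_X^{\bullet})$ of the horizontal triangle as $\partial^h = \partial^g \circ d^*$, where $\partial^g$ is the connecting morphism of the Hodge-filtration long exact sequence and factors through the de Rham differential $H^{i-m}(X,\Omega_X^m) \to H^{i-m}(X,\Omega_X^{m+1})$. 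The degeneration of the Hodge-de Rham spectral sequence (\cite{Bhatt2018} theorem 13.3) kills this de Rham differential, hence $\partial^g = 0$ and thus $\partial^h = 0$; by the long exact sequence of the horizontal triangle, $p_1^*$ becomes surjective on cohomology. A symmetric lifting argument (using $p_1^*$ in place of $p_2^*$) then yields $\text{Coker}(f) \subseteq \text{Coker}(g)$.

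With both inclusions, the equivalence of HT and HdR degenerations in the proposition is immediate: the common cokernel equals $H^i(X,\Omega_X^m[-m])$ for all $i,m$ iff either spectral sequence degenerates. The hard part is the reverse inclusion, where the vanishing of $\partial^h$ is not formal and requires the external input of Hodge-de Rham degeneration from Bhatt--Morrow--Scholze, which depends on the Conrad-Gabber spreading theorem---exactly the dependency noted in remark \ref{hthdsp}.
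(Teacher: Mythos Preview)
Your setup with $M_m$ and the two distinguished triangles from Corollary~\ref{jdiiii} is exactly the paper's framework, and your proof of $\text{Coker}(g)\subseteq\text{Coker}(f)$ via the split surjectivity of $p_2^*$ is correct and matches the paper. The problem is the reverse inclusion.

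You obtain surjectivity of $p_1^*$ by invoking Hodge--de~Rham degeneration (through $\partial^h=\partial^g\circ d^*=0$). But the point of the proposition is to establish $\text{Coker}(f)=\text{Coker}(g)$ \emph{unconditionally}, so that the equivalence ``HT degenerates $\Leftrightarrow$ HdR degenerates'' follows structurally: each side is equivalent to the common cokernel being all of $H^i(X,\Omega_X^m[-m])$. Assuming HdR degeneration to prove one inclusion makes the implication ``HT $\Rightarrow$ HdR'' circular; in fact under that assumption $\text{Coker}(g)$ is already the whole space, so the inclusion $\text{Coker}(f)\subseteq\text{Coker}(g)$ becomes vacuous and your $\partial^h$-argument is superfluous. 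Your appeal to remark~\ref{hthdsp} is a misreading: that remark concerns the main theorem, not this proposition, whose proof in the paper uses neither degeneration.

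The missing ingredient is Proposition~\ref{torsionfree}. The $\xi$-torsion-freeness of $H^i(X,L\eta_{\xi,m}\prism_{X/\mathbb{B}^+_{\text{dR}}})$ --- whose First Proof is independent of both degenerations and of Conrad--Gabber --- makes the connecting map
\[
H^i(X,\tau_{\leq m}\overline{\prism}_{X/\mathbb{B}^+_{\text{dR}}})(m) \longrightarrow H^{i+1}(X,L\eta_{\xi,m+1}\prism_{X/\mathbb{B}^+_{\text{dR}}})
\]
vanish (torsion source, torsion-free target), so $H^i(X,L\eta_{\xi,m}\prism_{X/\mathbb{B}^+_{\text{dR}}})\twoheadrightarrow H^i(X,\tau_{\leq m}\overline{\prism}_{X/\mathbb{B}^+_{\text{dR}}})(m)$. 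This surjection factors through $H^i(X,M_m)$, whence your $p_1^*$ (the map $h$ in the paper) is surjective \emph{unconditionally}. With both $p_1^*$ and $p_2^*$ surjective and the square commuting, the two images in $H^i(X,\Omega_X^m[-m])$ coincide, giving $\text{Coker}(f)=\text{Coker}(g)$ without assuming either spectral sequence degenerates.
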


\begin{proof}
We know that the Hodge-Tate spectral sequence is induced from the filtration
$H^i(X, \tau_{\leq m}\overline{\prism}_{X/\mathbb{B}^+_{\text{dR}}})$
corresponding to standard truncation, and the Hodge-de Rham spectral sequence is induced from the Hodge filtration 
$H^{i}(X,F_m\Omega_{X}^{\bullet})$. It is then clear that the equivalence of degeneration is implied by the claim on $f$ and $g$. 

We now prove the claim.  We have a commutative diagram
\[
\begin{tikzcd}
& H^i(X, L\eta_{\xi,m} \prism_{X/\mathbb{B}^+_{\text{dR}}})
\arrow[d, hook, "\xi"] 
& &
\\
H^i(X, L\eta_{\xi,m+1} \prism_{X/\mathbb{B}^+_{\text{dR}}}) 
\arrow[r,hook]
&
H^i(X, L\eta_{\xi,m} \prism_{X/\mathbb{B}^+_{\text{dR}}})
\arrow[r,twoheadrightarrow]
\arrow[d,twoheadrightarrow]
&
H^i(X, \tau_{\leq m}\overline{  \prism}_{X/\mathbb{B}^+_{\text{dR}}}) (m)
& 
\\
&
H^i(X, L\eta_{\xi,m} \prism_{X/\mathbb{B}^+_{\text{dR}}}/\xi L\eta_{\xi,m} \prism_{X/\mathbb{B}^+_{\text{dR}}})
\arrow[ru,twoheadrightarrow,"h"]
& &
\end{tikzcd}
\]
where the vertical and horizontal sequences are both short exact sequences, which follows from proposition \ref{torsionfree}. The factorization $h$ is induced by the canonical map
\[
L\eta_{\xi,m} \prism_{X/\mathbb{B}^+_{\text{dR}}}/\xi L\eta_{\xi,m} \prism_{X/\mathbb{B}^+_{\text{dR}}}
\longrightarrow
L\eta_{\xi,m} \prism_{X/\mathbb{B}^+_{\text{dR}}}/
L\eta_{\xi,m+1} \prism_{X/\mathbb{B}^+_{\text{dR}}}
\cong 
\tau_{\leq m}\overline{  \prism}_{X/\mathbb{B}^+_{\text{dR}}} (m), 
\]
and $h$ is surjective because it is a factorization of a surjective map. 

Now corollary \ref{jdiiii} gives us a commutative diagram
\[
\begin{tikzcd}
&
H^{i}(X,F_{m+1}\Omega_{X}^{\bullet})
\arrow[d]
\arrow[rd,"g"]
&
\\
H^i(X, \tau_{\leq m-1}\overline{  \prism}_{X/\mathbb{B}^+_{\text{dR}}}) (m)
\arrow[r,hook]
\arrow[rd,"f"]
&
H^i(X, L\eta_{\xi,m} \prism_{X/\mathbb{B}^+_{\text{dR}}}/\xi L\eta_{\xi,m} \prism_{X/\mathbb{B}^+_{\text{dR}}})
\arrow[r,twoheadrightarrow]
\arrow[d,"h ",twoheadrightarrow]
&
H^{i}(X,F_m\Omega_{X}^{\bullet})
\arrow[d]
\\
&
H^i(X, \tau_{\leq m}\overline{  \prism}_{X/\mathbb{B}^+_{\text{dR}}}) (m)
\arrow[r]
&
H^{i}(X,\Omega_{X}^{m}[-m]),
\end{tikzcd}
\]
and the claim in the proposition follows immediately. 
\end{proof}

\bibliographystyle{alpha} 
\bibliography{ref}

\newcommand{\etalchar}[1]{$^{#1}$}
\begin{thebibliography}{{Guo}21b}

\bibitem[ACC{\etalchar{+}}18]{10author}
Patrick~B. {Allen}, Frank {Calegari}, Ana {Caraiani}, Toby {Gee}, David {Helm},
  Bao~V. {Le Hung}, James {Newton}, Peter {Scholze}, Richard {Taylor}, and
  Jack~A. {Thorne}.
\newblock {Potential automorphy over CM fields}.
\newblock {\em arXiv e-prints}, page arXiv:1812.09999, December 2018.

\bibitem[BMS18]{Bhatt2018}
Bhargav Bhatt, Matthew Morrow, and Peter Scholze.
\newblock Integral p-adic hodge theory.
\newblock {\em Publications math{\'{e}}matiques de IHES}, 128(1):219--397,
  November 2018.

\bibitem[CS17]{Caraiani_2017}
Ana Caraiani and Peter Scholze.
\newblock On the generic part of the cohomology of compact unitary shimura
  varieties.
\newblock {\em Annals of Mathematics}, 186(3):649--766, nov 2017.

\bibitem[CS19]{CS2}
Ana {Caraiani} and Peter {Scholze}.
\newblock {On the generic part of the cohomology of non-compact unitary Shimura
  varieties}.
\newblock {\em arXiv e-prints}, page arXiv:1909.01898, September 2019.

\bibitem[{Guo}21a]{Guo1}
Haoyang {Guo}.
\newblock {Crystalline cohomology of rigid analytic spaces}.
\newblock {\em arXiv e-prints}, page arXiv:2112.14304, December 2021.

\bibitem[{Guo}21b]{Guo2}
Haoyang {Guo}.
\newblock {Prismatic cohomology of rigid analytic spaces over de Rham period
  ring}.
\newblock {\em arXiv e-prints}, page arXiv:2112.14746, December 2021.

\bibitem[Guo22]{Guo2022-vo}
Haoyang Guo.
\newblock {Hodge--Tate} decomposition for non-smooth spaces.
\newblock {\em J. Eur. Math. Soc.}, March 2022.

\bibitem[Han16]{hansen2016period}
David Hansen.
\newblock Period morphisms and variations of p-adic hodge structure.
\newblock {\em preprint}, 2016.

\bibitem[Sch12]{scholze2012perfectoid}
Peter Scholze.
\newblock Perfectoid spaces: A survey.
\newblock {\em Current Developments in Mathematics}, 2012(1):193--227, 2012.

\bibitem[Sch13]{SCHOLZE2013}
Peter Scholze.
\newblock p-adic {H}odge theory for rigid-analytic varieties.
\newblock {\em Forum of Mathematics, Pi}, 1, 2013.

\bibitem[Sch15]{Scholze2015}
Peter Scholze.
\newblock On torsion in the cohomology of locally symmetric varieties.
\newblock {\em Annals of Mathematics}, pages 945--1066, November 2015.

\bibitem[SW20]{SW17}
Peter Scholze and Jared Weinstein.
\newblock {\em Berkeley Lectures on p-adic Geometry}.
\newblock Annals of Mathematics Studies. Princeton University Press, 2020.

\end{thebibliography}
\end{document}